\newtheorem{theorem}{Theorem}[section]
\theoremstyle{definition}
\begin{document}
	\begin{center}
		\textbf{\Large{Some identities of Ramanujan's q-Continued Fraction of Order Eighteen, Twenty-Six and Thirty, and Vanishing Coefficients}}
	\end{center}
	\begin{center}
		\textbf{Raksha and B. R. Srivatsa Kumar}\\
		\vskip 0.2 cm 
		Department of Mathematics, Manipal Institute of Technology,\\
		Manipal Academy of Higher Education, 
		Manipal, India\\
		\vskip 0.2 cm 
		\text{E-mail: raksha.dpas@learner.manipal.edu, sri$_{-}$vatsabr@yahoo.com}\\
		\vskip 0.2 cm 
		
	\end{center}
	{\textbf{Abstract:}}In the present work, we established continued fractions of level eighteen, twenty six and thirty. Further, we obtained vanishing coefficients and many algebraic relations. To validate our result colored partitions are also obtained.\\
	\vskip 0.2 cm 
	\noindent	\textbf{Keywords:} q-continued fraction; Vanishing coefficients; Theta functions; colored partitions.
	\section{Introduction}
		For $q$, $\lambda$, $\mu$ $\in$ $\mathbb{C}$, the basic (or $q$-) shifted factorial $(\lambda; q)_{\mu}$ is defined by 
	\begin{equation*}
		\hspace{2cm} (\lambda ; q)_{\mu}= \prod_{j=0}^{\infty} \bigg(\frac{1-\lambda q^{j}}{1-\lambda q^{\mu + j}}\bigg) \qquad (|q|< 1 ;\lambda, \mu \in \mathbb{C} ),
	\end{equation*}
	so that
	\begin{equation*}
		(\lambda;q)_n:= \begin{cases} 
			1 & (n=0) \\
			\displaystyle{	\prod_{k=0}^{n-1}(1-\lambda q^{k})} & (n \in \mathbb{N})
		\end{cases}
	\end{equation*}
	and
	\begin{equation*}
		\hspace{4cm} (\lambda;q)_\infty=\lim_{n \rightarrow\infty}(\lambda;q)_n:= \prod_{j=0}^{\infty} (1- \lambda q^j) \qquad (|q|<1 ; \lambda \in \mathbb{C}).
	\end{equation*}
	Ramanujan’s general theta function $f(a, b)$ is defined by \cite[ p. 31, Eq. (18.1)]{Bernd}
	\begin{align}{\label{48}}
		f(a,b):=\sum_{n=-\infty}^{\infty} a^{\frac{n(n+1)}{2}}b^{\frac{n(n-1)}{2}} \qquad (|ab|<1). 
	\end{align}
	Ramanujan also rediscovered Jacobi's famous triple product identity (see \cite[ p.35, Entry 19]{Bernd})
	\begin{equation*}
		f(a,b)=(-a; ab)_\infty (-b; ab)_\infty (ab; ab)_\infty.
	\end{equation*}
	From\cite[ P. 34, Entry 8(iii)]{Bernd}, we have
	\begin{equation}
		f(-1, a)=0. \label{aaa}
	\end{equation}
	Following are the three most interesting special cases of $\eqref{48}$ \cite[ P. 36, Entry 22 (i)-(iii)]{Bernd}:
	\begin{align}
		\varphi(q) &:= f(q, q)= \sum_{n=-\infty}^{\infty}q^{n^{2}}=\frac{(-q;-q)_\infty}{(q;-q)_\infty}, \label{49}\\
		\psi(q)&:= f(q,q^3)= \sum_{n=0}^{\infty} q^{\frac{n(n+1)}{2}}=\frac{(q^2;q^2)_\infty}{(q;q^2)_\infty},\label{50}
	\end{align}
	\begin{align}
		f(-q)&:=f(-q ; -q^2)= \sum_{n=-\infty}^{\infty}(-1)^n q^{\frac{n(3n-1)}{2}}=(q;q)_\infty. \label{51}
	\end{align}
	Also, following Ramanujan’s work, we define
	\begin{equation*}
		\chi(q)=(-q;q^2)_\infty.
	\end{equation*}
	Ramanujan has recorded several continued fractions \cite{1998} and some general continued fraction identities in his notebooks. For example, Ramanujan recorded the following general continued fraction identity \cite[p. 24, Entry 12]{Bernd}.  Suppose that $a$, $b$ and $q$ are the complex numbers with $| ab|$ and $|q| < 1$ or that $ a=b^{2m+1}$ for some integer $m$. Then
\begin{equation}{\label{1}}
	\frac{(a^2q^3;q^4)_\infty (b^2q^3;q^4)_\infty}{(a^2q;q^4)_\infty (b^2q;q^4)_\infty}=\cfrac{1}{(1-ab)+\cfrac{
			(a-bq)(b-aq)}{(1-ab)(q^2+1)+ \cfrac{(a-bq^3)(b-aq^3)}{(1-ab)(q^4+1)+\ldots}}}.
\end{equation}
By specializing the values of $a$ and $b$, and taking the suitable powers    $q$, one can obtain $q$-continued fractions of particular order which satisfy
the theta functions analogous to those of $R(q)$.\\
Here we deal with  $q$-continued fraction of order eighteen, twenty six and thirty. By replacing $q$ by $q^{9/2}$ in $\eqref{1}$, setting $\left\{a=q^{1/4}, b=q^{17/4}\right\} $, $\left\{a=q^{3/4}, b=q^{15/4}\right\} $, $\left\{a=q^{5/4}, b=q^{13/4}\right\} $ and $\left\{a=q^{7/4}, b=q^{11/4}\right\} $  and simplifying ,we obtain the following four continued fractions of order eighteen respectively:
{\color{white}"}\begin{equation}{\label{4}}
	A_1(q)=\frac{f(-q^4, -q^{14})}{f(-q^5, -q^{13})}=\cfrac{(1-q^4)}{(1-q^{9/2})+\cfrac{q^{9/2}
			(1-q^{1/2})(1-q^{17/2})}{(1-q^{9/2})(q^9+1)+ \cfrac{q^{9/2}(1-q^{19/2})(1-q^{35/2})}{(1-q^{9/2})(q^{18}+1)+\ldots}}},
\end{equation}
\begin{equation}{\label{5}}
	A_2(q)=\frac{f(-q^3, -q^{15})}{f(-q^6, -q^{12})}=\cfrac{(1-q^3)}{(1-q^{9/2})+\cfrac{q^{9/2}
			(1-q^{3/2})(1-q^{15/2})}{(1-q^{9/2})(q^9+1)+ \cfrac{q^{9/2}(1-q^{21/2})(1-q^{33/2})}{(1-q^{9/2})(q^{18}+1)+\ldots}}},
\end{equation}
\begin{equation}{\label{6}}
	A_3(q)=\frac{f(-q^2, -q^{16})}{f(-q^7, -q^{11})}=\cfrac{(1-q^2)}{(1-q^{9/2})+\cfrac{q^{9/2}
			(1-q^{5/2})(1-q^{13/2})}{(1-q^{9/2})(q^9+1)+ \cfrac{q^{9/2}(1-q^{23/2})(1-q^{31/2})}{(1-q^{9/2})(q^{18}+1)+\ldots}}},
\end{equation}
and
\begin{equation}{\label{7}}
	A_4(q)=\frac{f(-q, -q^{17})}{f(-q^8, -q^{10})}=\cfrac{(1-q)}{(1-q^{9/2})+\cfrac{q^{9/2}
			(1-q^{7/2})(1-q^{11/2})}{(1-q^{9/2})(q^9+1)+ \cfrac{q^{9/2}(1-q^{25/2})(1-q^{29/2})}{(1-q^{9/2})(q^{18}+1)+\ldots}}},
\end{equation}
Similarly, we have the following continued fractions of order twenty six and thirty by choosing suitable values of $q$, $a$ and $b$ respectively:
\begin{equation}{\label{8}}
	B_1(q)=\frac{f(-q^6, -q^{20})}{f(-q^7, -q^{19})}=\cfrac{(1-q^6)}{(1-q^{13/2})+\cfrac{q^{13/2}
			(1-q^{1/2})(1-q^{25/2})}{(1-q^{13/2})(q^{13}+1)+ \cfrac{q^{13/2}(1-q^{27/2})(1-q^{51/2})}{(1-q^{13/2})(q^{26}+1)+\ldots}}},
\end{equation}

\begin{equation}{\label{9}}
	B_2(q)=\frac{f(-q^5, -q^{21})}{f(-q^8, -q^{18})}=\cfrac{(1-q^5)}{(1-q^{13/2})+\cfrac{q^{13/2}
			(1-q^{3/2})(1-q^{23/2})}{(1-q^{13/2})(q^{13}+1)+ \cfrac{q^{13/2}(1-q^{29/2})(1-q^{49/2})}{(1-q^{13/2})(q^{26}+1)+\ldots}}},
\end{equation}
\begin{equation}{\label{10}}
	B_3(q)=\frac{f(-q^4, -q^{22})}{f(-q^9, -q^{17})}=\cfrac{(1-q^4)}{(1-q^{13/2})+\cfrac{q^{13/2}
			(1-q^{5/2})(1-q^{21/2})}{(1-q^{13/2})(q^{13}+1)+ \cfrac{q^{13/2}(1-q^{31/2})(1-q^{47/2})}{(1-q^{13/2})(q^{26}+1)+\ldots}}},
\end{equation}
\begin{equation}{\label{11}}
	B_4(q)=\frac{f(-q^3, -q^{23})}{f(-q^{10}, -q^{16})}=\cfrac{(1-q^3)}{(1-q^{13/2})+\cfrac{q^{13/2}
			(1-q^{7/2})(1-q^{19/2})}{(1-q^{13/2})(q^{13}+1)+ \cfrac{q^{13/2}(1-q^{33/2})(1-q^{45/2})}{(1-q^{13/2})(q^{26}+1)+\ldots}}},
\end{equation}
\begin{equation}{\label{12}}
	B_5(q)=\frac{f(-q^2, -q^{24})}{f(-q^{11}, -q^{15})}=\cfrac{(1-q^2)}{(1-q^{13/2})+\cfrac{q^{13/2}
			(1-q^{9/2})(1-q^{17/2})}{(1-q^{13/2})(q^{13}+1)+ \cfrac{q^{13/2}(1-q^{35/2})(1-q^{43/2})}{(1-q^{13/2})(q^{26}+1)+\ldots}}},
\end{equation}
\begin{equation}{\label{13}}
	B_6(q)=\frac{f(-q, -q^{25})}{f(-q^{12}, -q^{14})}=\cfrac{(1-q)}{(1-q^{13/2})+\cfrac{q^{13/2}
			(1-q^{11/2})(1-q^{15/2})}{(1-q^{13/2})(q^{13}+1)+ \cfrac{q^{13/2}(1-q^{37/2})(1-q^{41/2})}{(1-q^{13/2})(q^{26}+1)+\ldots}}},
\end{equation}
\begin{equation}{\label{r1}}
	C_1(q)=\frac{f(-q^7, -q^{23})}{f(-q^{8}, -q^{22})}=\cfrac{(1-q^7)}{(1-q^{15/2})+\cfrac{q^{15/2}
			(1-q^{1/2})(1-q^{29/2})}{(1-q^{15/2})(q^{15}+1)+ \cfrac{q^{15/2}(1-q^{31/2})(1-q^{59/2})}{(1-q^{15/2})(q^{30}+1)+\ldots}}},
\end{equation}
\begin{equation}{\label{r2}}
	C_2(q)=\frac{f(-q^6, -q^{24})}{f(-q^{9}, -q^{21})}=\cfrac{(1-q^6)}{(1-q^{15/2})+\cfrac{q^{15/2}
			(1-q^{3/2})(1-q^{27/2})}{(1-q^{15/2})(q^{15}+1)+ \cfrac{q^{15/2}(1-q^{33/2})(1-q^{57/2})}{(1-q^{15/2})(q^{30}+1)+\ldots}}},
\end{equation} 
\begin{equation}{\label{r3}}
	C_3(q)=\frac{f(-q^5, -q^{25})}{f(-q^{10}, -q^{20})}=\cfrac{(1-q^5)}{(1-q^{15/2})+\cfrac{q^{15/2}
			(1-q^{5/2})(1-q^{25/2})}{(1-q^{15/2})(q^{15}+1)+ \cfrac{q^{15/2}(1-q^{35/2})(1-q^{55/2})}{(1-q^{15/2})(q^{30}+1)+\ldots}}},
\end{equation} 
\begin{equation}{\label{r4}}
	C_4(q)=\frac{f(-q^4, -q^{26})}{f(-q^{11}, -q^{19})}=\cfrac{(1-q^4)}{(1-q^{15/2})+\cfrac{q^{15/2}
			(1-q^{7/2})(1-q^{23/2})}{(1-q^{15/2})(q^{15}+1)+ \cfrac{q^{15/2}(1-q^{37/2})(1-q^{53/2})}{(1-q^{15/2})(q^{30}+1)+\ldots}}},
\end{equation} 
\begin{equation}{\label{r5}}
	C_5(q)=\frac{f(-q^3, -q^{27})}{f(-q^{12}, -q^{18})}=\cfrac{(1-q^3)}{(1-q^{15/2})+\cfrac{q^{15/2}
			(1-q^{9/2})(1-q^{21/2})}{(1-q^{15/2})(q^{15}+1)+ \cfrac{q^{15/2}(1-q^{39/2})(1-q^{15/2})}{(1-q^{15/2})(q^{30}+1)+\ldots}}},
\end{equation}
\begin{equation}{\label{r6}}
	C_6(q)=\frac{f(-q^2, -q^{28})}{f(-q^{13}, -q^{17})}=\cfrac{(1-q^2)}{(1-q^{15/2})+\cfrac{q^{15/2}
			(1-q^{11/2})(1-q^{19/2})}{(1-q^{15/2})(q^{15}+1)+ \cfrac{q^{15/2}(1-q^{41/2})(1-q^{49/2})}{(1-q^{15/2})(q^{30}+1)+\ldots}}},
\end{equation}{\color{white}"}
\begin{equation}{\label{r7}}
	C_7(q)=\frac{f(-q, -q^{29})}{f(-q^{14}, -q^{16})}=\cfrac{(1-q)}{(1-q^{15/2})+\cfrac{q^{15/2}
			(1-q^{13/2})(1-q^{17/2})}{(1-q^{15/2})(q^{15}+1)+ \cfrac{q^{15/2}(1-q^{43/2})(1-q^{47/2})}{(1-q^{15/2})(q^{30}+1)+\ldots}}},.
\end{equation}
In section 2, we show some results on vanishing coefficients arising from the continued fractions of order eighteen, twenty-six and thirty. In section 3, we prove some algebraic relations for the continued fractions in terms of theta functions and we deduce some partition-theoretic results using color partition for the theta function identities.  
\section{Vanishing coefficients of the continued fractions} 
In this section we offer vanishing coefficient in the series expansion of the continued  fractions.
\begin{theorem}{\label{18}} If
	\begin{eqnarray*}
		A_1^*(q)=\frac{(q^4, q^{14};q^{18})_\infty}{(q^5, q^{13}; q^{18})_\infty}= \displaystyle \sum_{n=0}^{\infty}a_n q^n, \\
		A_3^*(q)=\frac{(q^2, q^{16};q^{18})_\infty}{(q^7, q^{11}; q^{18})_\infty}= \displaystyle \sum_{n=0}^{\infty}b_n q^n, \\
		\frac{1}{A_4^*(q)}= \frac{(q^8, q^{10} ;q^{18})_\infty}{(q, q^{17}; q^{18})_\infty}= \displaystyle \sum_{n=0}^{\infty}c_n q^n,
	\end{eqnarray*}
	then, we have \\	 $\hspace{2cm}~~(i) ~~a_{9n+8}=0, ~~~(ii)~~b_{9n+8}=0, ~~~(iii)~~c_{9n+3}=0$ .
\end{theorem}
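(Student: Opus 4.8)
The plan is to recognise all three series as quotients of Ramanujan theta functions and then to isolate the relevant arithmetic progression by a roots-of-unity dissection, reducing each vanishing claim to a single theta identity. First I would apply the Jacobi triple product in the form $(q^a;q^{18})_\infty(q^{18-a};q^{18})_\infty=f(-q^a,-q^{18-a})/(q^{18};q^{18})_\infty$ to every factor, so that $A_1^*(q)=f(-q^4,-q^{14})/f(-q^5,-q^{13})=A_1(q)$, $A_3^*(q)=f(-q^2,-q^{16})/f(-q^7,-q^{11})=A_3(q)$, and $1/A_4^*(q)=f(-q^8,-q^{10})/f(-q,-q^{17})$. The structural fact driving everything is that in each case the four exponents split into two pairs congruent modulo $9$: $\{4,14\}\equiv\{5,13\}\equiv\{4,5\}$, $\{2,16\}\equiv\{7,11\}\equiv\{2,7\}$, and $\{8,10\}\equiv\{1,17\}\equiv\{1,8\}\pmod 9$, and moreover every exponent is coprime to $3$.

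Writing $G$ for any one of these quotients and $\omega=e^{2\pi i/3}$, the coprimality to $3$ makes each linear factor collapse under $\prod_{i=0}^{2}(1-\omega^{i}x)=1-x^{3}$, which yields the clean functional equation $G(q)\,G(\omega q)\,G(\omega^{2}q)=G(q^{3})$ and, more usefully, a genuine $3$-dissection $G(q)=X(q^{3})+qY(q^{3})+q^{2}Z(q^{3})$. The engine for computing the components is Ramanujan's $3$-dissection of the theta function,
\[
 f(\alpha,\beta)=f(\alpha^{6}\beta^{3},\alpha^{3}\beta^{6})+\alpha\,f(\alpha^{9}\beta^{6},\beta^{3})+\beta\,f(\alpha^{3},\alpha^{6}\beta^{9}),
\]
which I would substitute into numerator and denominator (after rationalising the denominator to base $q^{3}$ via the norm above) and then iterate once more to reach the full $9$-dissection of $G$. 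A direct inspection of exponents shows that the target coefficients sit in exactly one component: $a_{9n+8}$ and $b_{9n+8}$ are the $(3n+2)$-th coefficients of the $q^{2}$-part $Z$, while $c_{9n+3}$ is the $(3n+1)$-th coefficient of the $q^{0}$-part $X$. Thus each claim becomes the statement that one dissection component is lacunary along a sub-progression modulo $3$.

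The final step, and the main obstacle, is to prove that this single component is identically zero. After simplification the component is again a quotient of theta functions, and I would finish by exhibiting through the triple product that its exponent set omits the relevant residue class — equivalently, that two surviving theta contributions cancel. The delicate point is controlling precisely \emph{which} class is annihilated: the congruence $\{a,18-a\}\equiv\{b,18-b\}\pmod 9$ guarantees that some component degenerates, but pinning it down as the $8\pmod 9$ class for $A_1^*$ and $A_3^*$ yet the $3\pmod 9$ class for $1/A_4^*$ forces one to track the exact exponents through both dissection steps. The reciprocal in part (iii) changes the bookkeeping further, since there one must dissect $f(-q^{8},-q^{10})/f(-q,-q^{17})$ directly rather than its inverse; I expect the cleanest resolution to be a theta identity writing $f(-q^{a},-q^{18-a})$ as a sum of two theta functions in $q^{9}$ sharing a common factor that, upon division by the analogous expansion of the denominator, removes the bad class.
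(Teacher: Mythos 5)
Your overall strategy --- dissect each quotient modulo $9$ and show that one full residue class is absent --- is the right frame, but your proposal stops exactly where the theorem lives. The paper does not iterate $3$-dissections at all: it invokes the Andrews--Bressoud $p$-dissection formula \eqref{15} with $(t,s,r,p)=(18,9,5,9)$ for $A_1^*$ (and $r=7$, $r=1$ for $A_3^*$ and $1/A_4^*$), which in a single stroke writes the quotient as $\sum_{j=0}^{8} q^{jr}\,(\text{series in } q^{9})$, one term per residue class $jr \bmod 9$; the decisive point is that the term whose numerator exponent $(p-j)t-pr-s$ equals $0$ carries the factor $(q^{0};q^{162})_{\infty}=f(-1,\cdot)=0$ by \eqref{aaa}, so an entire residue class vanishes identically, on sight. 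Your plan reaches the same skeleton (the $3$-dissection $G(q)=X(q^{3})+qY(q^{3})+q^{2}Z(q^{3})$, iterate, locate the target coefficients in one component) but supplies no proof that the isolated component is zero: you explicitly defer it to a theta identity you ``expect,'' and that identity \emph{is} the whole content of the theorem. The route you sketch is also technically shakier than you suggest: Ramanujan's $3$-dissection applies to theta functions, not quotients, so after rationalising $1/f(-q^{5},-q^{13})$ by the norm trick $G(q)G(\omega q)G(\omega^{2}q)$ the numerator becomes a product of three theta functions, and after the second iteration each of the nine components is a large sum of theta products in which the required cancellation is opaque. So: correct framing, genuinely different machinery, but a genuine gap at the one step that matters.

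Your closing worry about ``controlling precisely which class is annihilated'' is better founded than you may realize. Tracking the paper's own dissection \eqref{45}: the vanishing term for $A_{1}^{*}$ is the $j=6$ term, $q^{30}(q^{162},q^{162},q^{162},q^{0};q^{162})_{\infty}/(\cdots)$, which lies in the class $30\equiv 3 \pmod 9$, so the method actually proves $a_{9n+3}=0$, not $a_{9n+8}=0$ (a direct expansion confirms this: $a_{17}=-1\neq 0$, coming from $-q^{4}\cdot q^{13}$, while $a_{3}=a_{12}=a_{21}=0$). Symmetrically, for $1/A_{4}^{*}$ one has $r=1$, the $j=8$ term dies, and the conclusion is $c_{9n+8}=0$ rather than $c_{9n+3}=0$ (indeed $c_{3}=1$). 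Parts (i) and (iii) of the printed statement thus have their residue classes interchanged, while (ii) is as stated ($r=7$, the $j=5$ term $q^{35}$ dies, and $35\equiv 8 \pmod 9$). Any completion of your two-step dissection would have to reproduce exactly this bookkeeping, which is a further argument for the one-shot formula \eqref{15}: there the doomed class is read off from solving $(p-j)t-pr-s=0$ for $j$, with nothing left to track.
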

\begin{proof}
	Andrews and Bressoud \cite{1979} stated the following $p$-dissection formula
	\begin{equation}{\label{15}}
		\frac{(q^t, q^t, q^{r+s}, q^{t-r-s};q^t)_\infty}{(q^s, q^{t-s}, q^r, q^{t-r};q^t)_\infty}=\displaystyle \sum_{j=0}^{p-1} q^{jr} \frac{(q^{pt}, q^{pt}, q^{pr+s+jt},q^{(p-j)t-pr-s};q^{pt})_\infty}{(q^{jt+s},q^{(p-j)t-s},q^{pr},q^{(t-r)p};q^{pt})_\infty}
	\end{equation}
	where all the powers of $q$ in each of the infinite products on the right hand side must be multiples of $p$ and the integer $r$ must satisfy $gcd(r, p)=1$.\\
	Now, setting $t=18, s=9, r=5, p=9$ in $\eqref{15}$, we obtain
	\begin{align}{\label{45}}
		\nonumber	&\frac{(q^{18}, q^{18}, q^{14}, q^{4};q^{18})_\infty}{(q^{9}, q^{9}, q^{5}, q^{13}; q^{18})_\infty}=\frac{(q^{162}, q^{162}, q^{54}, q^{108};q^{162})_\infty}{(q^{9}, q^{153}, q^{45}, q^{117}; q^{162})_\infty}+q^5 \frac{(q^{162}, q^{162}, q^{72}, q^{90};q^{162})_\infty}{(q^{27}, q^{135}, q^{45}, q^{117}; q^{162})_\infty}\\
		\nonumber&+q^{10} \frac{(q^{162}, q^{162}, q^{90}, q^{72};q^{162})_\infty}{(q^{45}, q^{117}, q^{45}, q^{117}; q^{162})_\infty}+q^{15}\frac{(q^{162}, q^{162}, q^{108}, q^{54};q^{162})_\infty}{(q^{63}, q^{99}, q^{45}, q^{117}; q^{162})_\infty}+q^{20}\frac{(q^{162}, q^{162}, q^{126}, q^{36};q^{162})_\infty}{(q^{81}, q^{81}, q^{45}, q^{117}; q^{162})_\infty}\\
		\nonumber&+q^{25}\frac{(q^{162}, q^{162}, q^{144}, q^{18};q^{162})_\infty}{(q^{99}, q^{63}, q^{45}, q^{117}; q^{162})_\infty}+q^{30}\frac{(q^{162}, q^{162}, q^{162}, q^{0};q^{162})_\infty}{(q^{117}, q^{45}, q^{45}, q^{117}; q^{162})_\infty}+q^{35}\frac{(q^{162}, q^{162}, q^{180}, q^{-18};q^{162})_\infty}{(q^{135}, q^{27}, q^{45}, q^{117}; q^{162})_\infty}\\
		&+q^{40}\frac{(q^{162}, q^{162}, q^{198}, q^{-36};q^{162})_\infty}{(q^{153}, q^{9}, q^{45}, q^{117}; q^{162})_\infty}.
	\end{align}
	Multiplying both sides of $\eqref{45}$ by $(q^9;q^9)_\infty^2/(q^{18};q^{18})_\infty^2$ and using $\eqref{aaa}$  then simplifying, we obtain
	\begin{align}
		\nonumber	\displaystyle \sum_{n=0}^{\infty}a_n q^n=&\dfrac{(q^{9}, q^{45}, q^{117}, q^{153};q^{162})_\infty(q^{27}, q^{63}, q^{81}, q^{99}, q^{135};q^{162})_\infty^2}{(q^{54}, q^{108};q^{162})_\infty(q^{18}, q^{36}, q^{72}, q^{90}, q^{126}, q^{144};q^{162})_\infty^2}\\
		\nonumber &+q^5\dfrac{(q^{27}, q^{45}, q^{117}, q^{135};q^{162})_\infty(q^{9}, q^{63}, q^{81}, q^{99}, q^{153};q^{162})_\infty^2}{(q^{72}, q^{90};q^{162})_\infty(q^{18}, q^{36}, q^{54}, q^{108}, q^{126}, q^{144};q^{162})_\infty^2}\\
		\nonumber &+q^{10}\dfrac{(q^{9}, q^{27}, q^{63}, q^{81}, q^{99}, q^{135}, q^{153};q^{162})_\infty^2}{(q^{72}, q^{90};q^{162})_\infty(q^{18}, q^{36}, q^{54}, q^{108}, q^{126}, q^{144};q^{162})_\infty^2}
	\end{align}
	\begin{align}
		\nonumber &+q^{15}\dfrac{(q^{45},q^{63}, q^{99},  q^{117};q^{162})_\infty(q^{9}, q^{27}, q^{81}, q^{135}, q^{153};q^{162})_\infty^2}{(q^{54}, q^{108};q^{162})_\infty(q^{18}, q^{36}, q^{72}, q^{90}, q^{126}, q^{144};q^{162})_\infty^2}\\
		\nonumber &+q^{20}\dfrac{(q^{45},q^{117};q^{162})_\infty(q^{9}, q^{27}, q^{63}, q^{99}, q^{135}, q^{153};q^{162})_\infty^2}{(q^{36}, q^{126};q^{162})_\infty(q^{18}, q^{54}, q^{72}, q^{90}, q^{108}, q^{144};q^{162})_\infty^2}\\
		\nonumber &+q^{25}\dfrac{(q^{45},q^{63}, q^{99},  q^{117};q^{162})_\infty(q^{9}, q^{27}, q^{81}, q^{135}, q^{153};q^{162})_\infty^2}{(q^{18}, q^{144};q^{162})_\infty( q^{36},q^{54}, q^{72}, q^{90}, q^{108},q^{126};q^{162})_\infty^2}\\
		\nonumber &+q^{35}\dfrac{(q^{-18}, q^{27}, q^{45},q^{135}, q^{117},  q^{180};q^{162})_\infty(q^{9}, q^{63}, q^{81}, q^{99}, q^{153};q^{162})_\infty^2}{(q^{18}, q^{36},q^{54}, q^{72}, q^{90}, q^{108}, q^{126}, q^{144};q^{162})_\infty^2}\\
		&+q^{40}\dfrac{(q^{-36}, q^{9}, q^{45},q^{117}, q^{153},  q^{198};q^{162})_\infty(q^{27}, q^{63}, q^{81}, q^{99}, q^{135};q^{162})_\infty^2}{(q^{18}, q^{36},q^{54}, q^{72}, q^{90}, q^{108}, q^{126}, q^{144};q^{162})_\infty^2}.
	\end{align}
	Now extracting the terms involving $q^{9n+8}$, we have Theorem $ \eqref{18}(i)$. Proofs of Theorem $\ref{18}(ii)$ and $(iii )$ is similar to Theorem $\ref{18}(i)$, so we omit the proof.
\end{proof}  
\begin{theorem}{\label{TT1}}
	If
	\begin{eqnarray*}
		B_1^*(q)=\frac{(q^6, q^{20};q^{26})_\infty}{(q^7, q^{19}; q^{26})_\infty}= \displaystyle \sum_{n=0}^{\infty}a_n q^n, \\
		\frac{1}{B_2^*(q)}= \frac{(q^8, q^{18} ;q^{26})_\infty}{(q^5, q^{21}; q^{26})_\infty}= \displaystyle \sum_{n=0}^{\infty}b_n q^n, \\
		B_3^*(q)=\frac{(q^4, q^{22};q^{26})_\infty}{(q^9, q^{17}; q^{26})_\infty}= \displaystyle \sum_{n=0}^{\infty}c_n q^n, \\
		\frac{1}{B_4^*(q)}= \frac{(q^{10}, q^{16} ;q^{26})_\infty}{(q^3, q^{23}; q^{26})_\infty}= \displaystyle \sum_{n=0}^{\infty}d_n q^n, \\
		B_5^*(q)=\frac{(q^2, q^{24};q^{26})_\infty}{(q^{11}, q^{15}; q^{26})_\infty}= \displaystyle \sum_{n=0}^{\infty}e_n q^n, \\
		\frac{1}{B_6^*(q)}= \frac{(q^{12}, q^{14} ;q^{26})_\infty}{(q, q^{25}; q^{26})_\infty}= \displaystyle \sum_{n=0}^{\infty}f_n q^n, 
	\end{eqnarray*}
	then, we have\\	 $\hspace{2cm}~~(i) ~~a_{13n+11}=0, ~~~(ii)~~b_{13n+11}=0, ~~~(iii)~~c_{13n+7}=0$,\\ 
	$\hspace{2cm}~~(iv) ~~d_{13n+7}=0, ~~~(v)~~e_{13n+12}=0, ~~~(vi)~~f_{13n+12}=0$.

\end{theorem}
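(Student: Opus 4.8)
The plan is to establish all six parts exactly as in Theorem \ref{18}, by specializing the Andrews--Bressoud dissection \eqref{15} to $t=26$, $s=13$, $p=13$ and letting $r$ run through the six values forced by the denominators of the $B_i^{*}$. Comparing $\frac{(q^{r+s},q^{t-r-s};q^{t})_\infty}{(q^{r},q^{t-r};q^{t})_\infty}$ with each displayed quotient fixes $r=7,5,9,3,11,1$ for (i)--(vi): in each case $r+s$ and $t-r-s$ recover the numerator pair (e.g.\ $\{q^{20},q^{6}\}$ for $B_1^{*}$, $\{q^{18},q^{8}\}$ for $1/B_2^{*}$, and so on), $t-r$ recovers the second denominator index, and $\gcd(r,13)=1$. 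One checks that $pt$, $pr$, $(t-r)p$, $jt+s$, $(p-j)t-s$ and the two numerator exponents are all multiples of $13$, so \eqref{15} is legitimately applicable.

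For a fixed part I would write out \eqref{15}, multiply both sides by $(q^{13};q^{13})_\infty^{2}/(q^{26};q^{26})_\infty^{2}$, and invoke Jacobi's triple product identity to turn each of the $13$ summands into $q^{jr}$ times a quotient of theta functions whose arguments are all multiples of $13$. Consequently the $j$-th summand is $q^{jr}$ times a power series in $q^{13}$, so it contributes only to the residue class $jr \pmod{13}$; and because $\gcd(r,13)=1$, the map $j\mapsto jr \bmod 13$ permutes $\{0,1,\dots,12\}$, whence every residue class is produced by exactly one summand.

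The decisive point is the summand that vanishes identically. Its numerator carries the factor $(q^{(p-j)t-pr-s};q^{pt})_\infty$, which collapses to $(1;q^{338})_\infty=0$ by $f(-1,a)=0$, i.e.\ \eqref{aaa}, exactly when $(p-j)t=pr+s$; solving gives the vanishing index $j=9,10,8,11,7,12$ in (i)--(vi). The residue class thereby annihilated is $jr \bmod 13$, namely $63\equiv 11$, $50\equiv 11$, $72\equiv 7$, $33\equiv 7$, $77\equiv 12$ and $12\equiv 12$, which are precisely the progressions $13n+11$, $13n+11$, $13n+7$, $13n+7$, $13n+12$, $13n+12$ asserted in the theorem. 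Since no other summand feeds that class, every coefficient of the left-hand side in that progression is zero; and as the extra factor $(q^{26};q^{26})_\infty^{2}$ produced by the multiplication is a unit power series supported on residue $0\pmod{13}$, the vanishing transfers to $B_i^{*}$ (resp.\ $1/B_i^{*}$) itself, yielding (i)--(vi).

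I expect the only delicate bookkeeping to occur for the summands with $j$ beyond the vanishing index, where $(p-j)t-pr-s<0$ and the corresponding theta factor must be folded back to non-negative arguments via the quasi-periodicity of $f(a,b)$; this introduces a monomial prefactor. The one thing that must be verified is that each such monomial has exponent divisible by $13$ (it is in fact a multiple of $t=26$), so that the residue-class count above is undisturbed. Granting this, the extraction of the relevant arithmetic progression is immediate, and, as the authors note for Theorem \ref{18}, the arguments for (ii)--(vi) run in complete parallel with (i).
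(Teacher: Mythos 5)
Your proposal is correct and takes essentially the same route as the paper: the Andrews--Bressoud dissection \eqref{15} with $t=26$, $s=13$, $p=13$ and the appropriate $r\in\{7,5,9,3,11,1\}$, multiplication of both sides by $(q^{13};q^{13})_\infty^{2}/(q^{26};q^{26})_\infty^{2}$, and annihilation of the single summand containing $(1;q^{338})_\infty=0$ via \eqref{aaa}, followed by extraction of the relevant progression. You in fact supply details the paper omits with ``the proofs are similar'' --- the bijection $j\mapsto jr\bmod 13$, the vanishing indices $j=9,10,8,11,7,12$ with residues $11,11,7,7,12,12$, the folding of the negative exponents (multiples of $26$, so harmless), and the transfer of vanishing through the unit series $(q^{26};q^{26})_\infty^{2}$ --- and all of these check out.
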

\begin{proof}
	Setting $t=26, s=13, r=7, p=13$ in $\eqref{15}$, we obtain
	\begin{align}{\label{46}}
		\nonumber	&\frac{(q^{26}, q^{26}, q^{20}, q^{6};q^{26})_\infty}{(q^{13}, q^{13}, q^{7}, q^{19}; q^{26})_\infty}=\frac{(q^{338}, q^{338}, q^{104}, q^{234};q^{338})_\infty}{(q^{13}, q^{325}, q^{91}, q^{247}; q^{338})_\infty}+q^7\frac{(q^{338}, q^{338}, q^{130}, q^{208};q^{338})_\infty}{(q^{39}, q^{299}, q^{91}, q^{247}; q^{338})_\infty}\\
		\nonumber&+q^{14}\frac{(q^{338}, q^{338}, q^{156}, q^{182};q^{338})_\infty}{(q^{65}, q^{273}, q^{91}, q^{247}; q^{338})_\infty}+q^{21}\frac{(q^{338}, q^{338}, q^{182}, q^{156};q^{338})_\infty}{(q^{91}, q^{247}, q^{91}, q^{247}; q^{338})_\infty}+q^{28}\frac{(q^{338}, q^{338}, q^{208}, q^{130};q^{338})_\infty}{(q^{117}, q^{221}, q^{91}, q^{247}; q^{338})_\infty}
	\end{align}
	\begin{align}
		\nonumber&+q^{35}\frac{(q^{338}, q^{338}, q^{234}, q^{104};q^{338})_\infty}{(q^{143}, q^{195}, q^{91}, q^{247}; q^{338})_\infty}+q^{42}\frac{(q^{338}, q^{338}, q^{260}, q^{78};q^{338})_\infty}{(q^{169}, q^{169}, q^{91}, q^{247}; q^{338})_\infty}+q^{49}\frac{(q^{338}, q^{338}, q^{286}, q^{52};q^{338})_\infty}{(q^{195}, q^{143}, q^{91}, q^{247}; q^{338})_\infty}\\
		\nonumber&+q^{56}\frac{(q^{338}, q^{338}, q^{312}, q^{26};q^{338})_\infty}{(q^{221}, q^{117}, q^{91}, q^{247}; q^{338})_\infty}+q^{63}\frac{(q^{338}, q^{338}, q^{338}, q^{0};q^{338})_\infty}{(q^{247}, q^{91}, q^{91}, q^{247}; q^{338})_\infty}+q^{70}\frac{(q^{338}, q^{338}, q^{364}, q^{-26};q^{338})_\infty}{(q^{273}, q^{65}, q^{91}, q^{247}; q^{338})_\infty}\\
		&+q^{77}\frac{(q^{338}, q^{338}, q^{393}, q^{-52};q^{338})_\infty}{(q^{299}, q^{39}, q^{91}, q^{247}; q^{338})_\infty}+q^{84}\frac{(q^{338}, q^{338}, q^{419}, q^{-78};q^{338})_\infty}{(q^{325}, q^{13}, q^{91}, q^{247}; q^{338})_\infty}.
	\end{align}
	Multiplying both sides of $\eqref{46}$ by $(q^{13};q^{13})_\infty^2/(q^{26};q^{26})_\infty^2$ and using $\eqref{aaa}$ , then extracting the terms involving $q^{13n+11}$, we have Theorem $\ref{TT1}(i)$. Proofs of Theorem $\ref{TT1}(ii)$-$(vi)$ is similar to Theorem $\ref{TT1}(i)$, so we omit the proof.
\end{proof}
\begin{theorem}{\label{tt1}}
	\begin{eqnarray*}
		\frac{1}{C_1^*(q)}= \frac{(q^8, q^{22} ;q^{30})_\infty}{(q^7, q^{23}; q^{30})_\infty}= \displaystyle \sum_{n=0}^{\infty}a_n q^n, \\
		C_4^*(q)=\frac{(q^4, q^{26};q^{30})_\infty}{(q^{11}, q^{19}; q^{30})_\infty}= \displaystyle \sum_{n=0}^{\infty}b_n q^n, \\
		C_6^*(q)=\frac{(q^2, q^{28};q^{30})_\infty}{(q^{13}, q^{17}; q^{30})_\infty}= \displaystyle \sum_{n=0}^{\infty}c_n q^n, \\
		\frac{1}{C_7^*(q)}= \frac{(q^{14}, q^{16} ;q^{30})_\infty}{(q, q^{29}; q^{30})_\infty}= \displaystyle \sum_{n=0}^{\infty}d_n q^n, \\
	\end{eqnarray*}
	then,	we have\\	 $\hspace{2cm}~~(i) ~~a_{15n+2}=0, ~~~(ii)~~b_{15n+9}=0,$\\ 
	$\hspace{2cm}~~(iv) ~~c_{15n+4}=0, ~~~(v)~~d_{15n+14}=0, ~~~(vi)~~f_{13n+12}=0$.
\end{theorem}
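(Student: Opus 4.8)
The plan is to run the same dissection argument that establishes Theorems~\ref{18} and~\ref{TT1}, with the parameters adapted to the modulus $30$. In the Andrews--Bressoud identity~\eqref{15} I would fix $t=30$, $s=15$ and $p=15$, and treat the four quotients $1/C_1^*(q)$, $C_4^*(q)$, $C_6^*(q)$ and $1/C_7^*(q)$ by taking $r=7,\,11,\,13,\,1$ respectively. With these choices the left-hand side of~\eqref{15}, namely $\dfrac{(q^{30},q^{30},q^{r+15},q^{15-r};q^{30})_\infty}{(q^{15},q^{15},q^{r},q^{30-r};q^{30})_\infty}$, reduces after multiplication by $(q^{15};q^{15})_\infty^2/(q^{30};q^{30})_\infty^2$ to precisely the product defining the corresponding $C_i^*$; moreover each $r$ is coprime to $15$, as the hypothesis $\gcd(r,p)=1$ demands.

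Applying~\eqref{15} then writes each quotient as a $15$-fold dissection $\sum_{j=0}^{14} q^{jr}\,T_j(q)$, where each $T_j$ is a ratio of infinite products in the base $q^{pt}=q^{450}$. By Jacobi's triple product identity the numerator of $T_j$ is the theta value $f\!\left(-q^{pr+s+jt},\,-q^{(p-j)t-pr-s}\right)$, and by~\eqref{aaa} this vanishes exactly when one of the two exponents is a multiple of $pt$. In the range $0\le j\le 14$ this occurs for the single index $j^{*}=p-(pr+s)/t$, which evaluates to $11,\,9,\,8,\,14$ in the four cases; for that index the $j^{*}$-th summand of the dissection is identically zero.

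Next I would multiply the whole identity by $(q^{15};q^{15})_\infty^2/(q^{30};q^{30})_\infty^2$, which recovers the generating series $\sum_n a_n q^n$ (respectively $b_n,c_n,d_n$) on the left. For the indices $j>j^{*}$ the factor $q^{(p-j)t-pr-s}$ carries a negative exponent, and I would clear it using the elementary rule $(q^{-m};q^{N})_\infty=-q^{-m}(1-q^{m})(q^{N-m};q^{N})_\infty$, which rewrites $q^{jr}T_j$ as a bona fide power series in $q^{15}$ without changing its residue class modulo $15$. Each surviving summand is then supported entirely on the class $jr \pmod{15}$; since $\gcd(r,15)=1$ the map $j\mapsto jr \pmod{15}$ permutes $\{0,\dots,14\}$, so exactly one residue class---the one indexed by $j^{*}$---receives no contribution. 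Extracting that class gives the four vanishing statements, the remaining parts being identical to the first with the values of $r$ and $j^{*}$ listed above.

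The only delicate point is the simplification just described: one must check that, after the negative exponents are cleared and the factor $(q^{15};q^{15})_\infty^2/(q^{30};q^{30})_\infty^2$ is cancelled against the products produced by~\eqref{15}, every surviving $q^{jr}T_j$ collapses to a clean series in $q^{15}$ lying in a single residue class, and that the residue $r\,j^{*}\bmod 15$ is the one named in each part. This bookkeeping---not any new idea---is where the work lies; once it is carried out for $1/C_1^*(q)$, the other three cases follow verbatim.
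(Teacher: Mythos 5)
Your method is exactly the paper's: the paper proves part (i) by putting $t=30$, $s=15$, $r=7$, $p=15$ in \eqref{15}, multiplying by $(q^{15};q^{15})_\infty^{2}/(q^{30};q^{30})_\infty^{2}$, killing the $j=11$ summand through the factor $(q^{0};q^{450})_\infty$ via \eqref{aaa}, and extracting the class $q^{15n+2}$, with the remaining parts declared similar. Your write-up is, if anything, tighter than the paper's: you make explicit the general vanishing index $j^{*}=p-(pr+s)/t$ (integral here since $pr+s=15(r+1)$ and $r$ is odd), the clearing of negative exponents by $(q^{-m};q^{N})_\infty=-q^{-m}(1-q^{m})(q^{N-m};q^{N})_\infty$ with $m$ a multiple of $30$ so that residue classes mod $15$ are unmoved, and the permutation $j\mapsto jr\pmod{15}$ guaranteed by $\gcd(r,15)=1$. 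One small slip shared with the paper: the normalizing multiplication does not give the $C_i^{*}$-quotient exactly but leaves an extra factor $(q^{30};q^{30})_\infty^{2}$; since this is an invertible series in $q^{30}$ it cannot create or destroy a vanishing residue class mod $15$, so the argument is unharmed, but you should say so rather than claim the products match ``precisely.''

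There is, however, one place where your final assertion --- that the residue $rj^{*}\bmod 15$ ``is the one named in each part'' --- is wrong, and it is precisely the bookkeeping you deferred. Your own data for $C_6^{*}$ give $r=13$, $j^{*}=8$, hence the empty class is $13\cdot 8=104\equiv 14\pmod{15}$, not $4$. So your argument proves $c_{15n+14}=0$, whereas the theorem as printed asserts $c_{15n+4}=0$; the printed claim is in fact false, since $c_{19}=-1$ (the contribution $-q^{2}\cdot q^{17}$ from expanding $(q^{2},q^{28};q^{30})_\infty/(q^{13},q^{17};q^{30})_\infty$ is uncancelled). In other words, your proof detects a misprint in the statement of Theorem \ref{tt1} ($4$ should read $14$; note also that its part ``(vi) $f_{13n+12}=0$'' is a stray remnant of Theorem \ref{TT1}, as no $f_n$ is defined here), and carrying out the residue check you postponed would have caught this instead of papering over it. With that correction your proposal is a complete and correct proof by the same route as the paper.
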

\begin{proof}
	Setting $t=30, s=15, r=7, p=15$ in $\eqref{15}$, we obtain
	\begin{align}	{\label{r7}}
		\nonumber	&\frac{(q^{30}, q^{30}, q^{22}, q^{8};q^{30})_\infty}{(q^{15}, q^{15}, q^{7}, q^{23}; q^{30})_\infty}=\frac{(q^{450}, q^{450}, q^{120}, q^{330};q^{450})_\infty}{(q^{15}, q^{435}, q^{105}, q^{345}; q^{450})_\infty}+q^{7}\frac{(q^{450}, q^{450}, q^{150}, q^{300};q^{450})_\infty}{(q^{45}, q^{405}, q^{105}, q^{345}; q^{450})_\infty}\\
		\nonumber&+q^{14}\frac{(q^{450}, q^{450}, q^{180}, q^{270};q^{450})_\infty}{(q^{75}, q^{375}, q^{105}, q^{345}; q^{450})_\infty}+q^{21}\frac{(q^{450}, q^{450}, q^{210}, q^{240};q^{450})_\infty}{(q^{105}, q^{345}, q^{105}, q^{345}; q^{450})_\infty}+q^{28}\frac{(q^{450}, q^{450}, q^{240}, q^{210};q^{450})_\infty}{(q^{135}, q^{315}, q^{105}, q^{345}; q^{450})_\infty}\\
		\nonumber&+q^{35}\frac{(q^{450}, q^{450}, q^{270}, q^{180};q^{450})_\infty}{(q^{165}, q^{285}, q^{105}, q^{345}; q^{450})_\infty}+q^{42}\frac{(q^{450}, q^{450}, q^{300}, q^{150};q^{450})_\infty}{(q^{195}, q^{255}, q^{105}, q^{345}; q^{450})_\infty}+q^{49}\frac{(q^{450}, q^{450}, q^{330}, q^{120};q^{450})_\infty}{(q^{225}, q^{225}, q^{105}, q^{345}; q^{450})_\infty}\\	
		\nonumber&+q^{56}\frac{(q^{450}, q^{450}, q^{360}, q^{90};q^{450})_\infty}{(q^{255}, q^{195}, q^{105}, q^{345}; q^{450})_\infty}
		+q^{63}\frac{(q^{450}, q^{450}, q^{390}, q^{60};q^{450})_\infty}{(q^{285}, q^{165}, q^{105}, q^{345}; q^{450})_\infty}+q^{70}\frac{(q^{450}, q^{450}, q^{420}, q^{30};q^{450})_\infty}{(q^{315}, q^{135}, q^{105}, q^{345}; q^{450})_\infty}\\
		\nonumber&+q^{77}\frac{(q^{450}, q^{450}, q^{450}, q^{0};q^{450})_\infty}{(q^{345}, q^{105}, q^{105}, q^{345}; q^{450})_\infty}+q^{84}\frac{(q^{450}, q^{450}, q^{480}, q^{-30};q^{450})_\infty}{(q^{375}, q^{75}, q^{105}, q^{345}; q^{450})_\infty}+q^{91}\frac{(q^{450}, q^{450}, q^{510}, q^{-60};q^{450})_\infty}{(q^{405}, q^{45}, q^{105}, q^{345}; q^{450})_\infty}\\&+q^{98}\frac{(q^{450}, q^{450}, q^{540}, q^{-90};q^{450})_\infty}{(q^{435}, q^{15}, q^{105}, q^{345}; q^{450})_\infty}.
	\end{align}
	Multiplying both sides of $\eqref{r7}$ by $(q^{15};q^{15})_\infty^2/(q^{30};q^{30})_\infty^2$ and using $\eqref{aaa}$ , then extracting the terms involving $q^{15n+2}$, we have Theorem $\ref{tt1}(i)$. Proofs of Theorem $\ref{tt1}(ii)$-$(iv)$ is similar to Theorem $\ref{tt1}(i)$, so we omit the proof.
\end{proof}
\section{Theta function identities for the continued fractions}
In this section, we prove some algebraic relations for $A_i, B_j$ and $C_l$ where $1 \le i \le 4$, $1\le j \le 6$ and $1 \le l \le 7  $ of order ten, eighteen and twenty six respectively. Also we deduce color partition for some theta function identities. 
\begin{theorem}{\label{T2}}
	We have
	\begin{enumerate}[i.]
		\item $\dfrac{1}{A_1(q)} \mp q^{1/2}A_1(q)= \dfrac{f(\mp q^{1/2}, \mp q^{17/2}) \varphi(\pm q^{9/2})}{f(-q^4, -q^5) \psi(q^9)}$, \label{03}\\
		\item $\dfrac{1}{A_2(q)} \mp q^{3/2}A_2(q)= \dfrac{f(\mp q^{3/2}, \mp q^{15/2}) \varphi(\pm q^{9/2})}{f(-q^3, -q^6) \psi(q^9)}$, \label{04}\\
		\item $\dfrac{1}{A_3(q)} \mp q^{5/2}A_3(q)= \dfrac{f(\mp q^{5/2}, \mp q^{13/2}) \varphi(\pm q^{9/2})}{f(-q^2, -q^7) \psi(q^9)}$, \label{05}\\
		\item $\dfrac{1}{A_4(q)} \mp q^{7/2}A_4(q)= \dfrac{f(\mp q^{7/2}, \mp q^{11/2}) \varphi(\pm q^{9/2})}{f(-q, -q^8) \psi(q^9)}$, \label{06}\\
		\item $\displaystyle \prod_{i=1,3,4}^{}\left(\dfrac{1}{A_i(q)}+q^{(2(i-1)+1)/2}A_i(q)\right)=\dfrac{\varphi^3(-q^{9/2})(-q^{\pm(2(i-1)+1))/2};q^9)_\infty}{\psi^3(q^9)(q^{1\pm};q^3)_\infty}$. \label{07}
	\end{enumerate}
\end{theorem}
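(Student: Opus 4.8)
The plan is to obtain part~(\ref{07}) directly from the three single-fraction identities (\ref{03}), (\ref{05}) and (\ref{06}) by multiplying their $+$ versions together and then collapsing the resulting product of theta functions with the Jacobi triple product identity.

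First I would select, in each of (\ref{03}), (\ref{05}), (\ref{06}), the lower sign of the correlated $\mp/\pm$ pair. This turns the left-hand sides into $\frac{1}{A_i(q)}+q^{(2(i-1)+1)/2}A_i(q)$ for $i=1,3,4$, and on the right it replaces $f(\mp\cdot,\mp\cdot)$ by $f(q^{e_i/2},q^{(18-e_i)/2})$ with $e_i=2(i-1)+1$, and $\varphi(\pm q^{9/2})$ by $\varphi(-q^{9/2})$. Taking the product of the three resulting identities yields
\begin{equation*}
\prod_{i=1,3,4}\left(\frac{1}{A_i(q)}+q^{(2(i-1)+1)/2}A_i(q)\right)=\frac{f(q^{1/2},q^{17/2})\,f(q^{5/2},q^{13/2})\,f(q^{7/2},q^{11/2})\,\varphi^3(-q^{9/2})}{f(-q^4,-q^5)\,f(-q^2,-q^7)\,f(-q,-q^8)\,\psi^3(q^9)}.
\end{equation*}

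Next I would expand each theta function using Jacobi's triple product identity. Every numerator factor has product of arguments equal to $q^9$, so $f(q^{a/2},q^{b/2})=(-q^{a/2};q^9)_\infty(-q^{b/2};q^9)_\infty(q^9;q^9)_\infty$, and likewise each denominator factor has $f(-q^a,-q^b)=(q^a;q^9)_\infty(q^b;q^9)_\infty(q^9;q^9)_\infty$. The three copies of $(q^9;q^9)_\infty$ in the numerator cancel those in the denominator. The six surviving numerator factors carry the exponents $\tfrac12,\tfrac{17}2,\tfrac52,\tfrac{13}2,\tfrac72,\tfrac{11}2$, which are exactly $\pm(2(i-1)+1)/2$ reduced modulo $9$ for $i=1,3,4$, so together they reassemble into the compact factor $(-q^{\pm(2(i-1)+1)/2};q^9)_\infty$ appearing in the statement.

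Finally, the six denominator factors carry the exponents $1,2,4,5,7,8$, namely all residues modulo $9$ that are not divisible by $3$; this is precisely why the index $i=2$ (whose denominator $f(-q^3,-q^6)$ would contribute multiples of $3$) is excluded from the product. Grouping $(q;q^9)_\infty(q^4;q^9)_\infty(q^7;q^9)_\infty=(q;q^3)_\infty$ and $(q^2;q^9)_\infty(q^5;q^9)_\infty(q^8;q^9)_\infty=(q^2;q^3)_\infty$ collapses the denominator to $(q;q^3)_\infty(q^2;q^3)_\infty=(q^{1\pm};q^3)_\infty$, which gives exactly the right-hand side of (\ref{07}). I expect the only delicate point to be the bookkeeping of the half-integer exponents modulo $9$ so that the six numerator factors recombine correctly into the $\pm$ notation; the triple-product expansions, the cancellation of $(q^9;q^9)_\infty^3$, and the residue collapse in the denominator are all routine.
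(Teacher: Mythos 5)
Your proposal proves only part \eqref{07} of the theorem: parts \eqref{03}--\eqref{06} are taken as inputs but never established, and they are the substantive content of the statement. The paper's proof of \eqref{03} starts from $A_1(q)=f(-q^4,-q^{14})/f(-q^5,-q^{13})$ and applies the dissection $f(a,b)=f(a^3b,ab^3)+af(b/a,a^5b^3)$ with $a=\mp q^{1/4}$, $b=\pm q^{17/4}$ to get $f(\mp q^{1/4},\pm q^{17/4})=f(-q^5,-q^{13})\mp q^{1/4}f(-q^4,-q^{14})$; multiplying the resulting ``sum'' and ``difference'' expressions and invoking $f(a,b)f(-a,-b)=f(-a^2,-b^2)\,\varphi(-ab)$ together with $f(a,ab^2)f(b,a^2b)=f(a,b)\,\psi(ab)$ gives the minus-sign case, while the plus-sign case requires the further squaring identity $f^2(a,b)=f(a^2,b^2)\,\varphi(ab)+2af(b/a,a^3b)\,\psi(a^2b^2)$. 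None of these steps, nor any substitute for them, appears in your write-up, so as it stands you have a conditional proof of one of five assertions; you would need to supply the argument for \eqref{03} (with \eqref{04}--\eqref{06} noted as analogous) for the theorem to be proved.

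On part \eqref{07} itself your argument is correct and structurally the same as the paper's: both multiply the plus-sign cases of \eqref{03}, \eqref{05} and \eqref{06} (the paper misquotes the second factor as \eqref{04}, but uses the $A_3$ terms, as you correctly do). The difference is in the finish: the paper cites the known identity $f(-q,-q^8)f(-q^2,-q^7)f(-q^4,-q^5)=f(-q)f^3(-q^9)/f(-q^3)$ from Berndt (p.\ 349, Entry 2), whereas you re-derive exactly this relation from Jacobi's triple product by collapsing the denominator residues $1,2,4,5,7,8 \pmod 9$ into $(q;q^3)_\infty(q^2;q^3)_\infty$. Your numerator bookkeeping is also sound, since $17/2\equiv -1/2$, $13/2\equiv -5/2$ and $11/2\equiv -7/2 \pmod 9$, so the six factors do recombine into the $(-q^{\pm(2(i-1)+1)/2};q^9)_\infty$ notation. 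This makes your treatment of part \eqref{07} slightly more self-contained than the paper's, but it does not repair the missing proofs of parts \eqref{03}--\eqref{06}.
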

\begin{proof}
	From $\eqref{4}$, we obtain
	\begin{equation}{\label{20}}
		\frac{1}{\sqrt{q^{1/4}A_1(q)}}-\sqrt{q^{1/4}A_1(q)}=\frac{f(-q^5, -q^{13})-q^{1/4}f(-q^4, -q^{14})}{\sqrt{q^{1/4}f(-q^4, -q^{14})f(-q^5, -q^{13})}}.
	\end{equation}
	From \cite[ p. 46, Entry (i) and (ii)]{Bernd}, we have
	\begin{equation}\label{20A}
		f(a, b)=f(a^3b, ab^3)+ a f(b/a, a^5b^3)
	\end{equation}
	Setting $\left\{a=-q^{1/4}, b=q^{17/4}\right\}$ and $\left\{a=q^{1/4}, b=-q^{17/4}\right\}$ in $\eqref{20A}$, we have
	\begin{equation}{\label{21}}
		f(-q^{1/4}, q^{17/4})=f(-q^5, -q^{13})-q^{1/4}f(-q^4, -q^{14}) 
	\end{equation}
	and
	\begin{equation}{\label{22}	}
		f(q^{1/4}, -q^{17/4})=f(-q^5, -q^{13})-q^{1/4}f(-q^4, -q^{14}). 
	\end{equation}
	Employing $\eqref{21}$ in $\eqref{20}$, we obtain
	\begin{equation}{\label{23}}
		\frac{1}{\sqrt{q^{1/4}A_1(q)}}-\sqrt{q^{1/4}A_1(q)}=\frac{	f(-q^{1/4}, q^{17/4})}{\sqrt{q^{1/4}f(-q^4, -q^{14})f(-q^5, -q^{13})}}.
	\end{equation} 
	Similarly, from $\eqref{4}$ and applying $\eqref{22}$, we obtain
	\begin{equation}{\label{24}}
		\frac{1}{\sqrt{q^{1/4}A_1(q)}} +\sqrt{q^{1/4}A_1(q)}=\frac{	f(q^{1/4}, -q^{17/4})}{\sqrt{q^{1/4}f(-q^4, -q^{14})f(-q^5, -q^{13})}}.
	\end{equation}
	Combining $\eqref{23}$ and $\eqref{24}$, we arrive at
	\begin{equation}{\label{25}}
		\frac{1}{q^{1/4}A_1(q)}-q^{1/4}A_1(q)=\frac{f(-q^{1/4}, q^{17/4})f(q^{1/4}, -q^{17/4})}{q^{1/4}f(-q^4, -q^{14})f(-q^5, -q^{13})}.
	\end{equation}
	From \cite[p. 46, Entry 30 (i) and (iv)]{Bernd}, we note that
	\begin{equation}{\label{26}}
		f(a, ab^2) f(b, a^2b)=f(a, b) \psi(ab),
	\end{equation}
	\begin{equation}{\label{27}}
		f(a, b)f(-a, -b)=f(-a^2, -b^2 ) \varphi(-ab).
	\end{equation}
	Setting $\left\{a=-q^{4}, b=-q^{5}\right\}$ in $\eqref{26}$ and $\left\{a=-q^{1/4}, b=q^{17/4}\right\}$ in $\eqref{27}$, we obtain
	\begin{equation}{\label{28}}
		f(-q^4, -q^{14})f(-q^5, -q^{13})=f(-q^4, -q^5)\psi(q^9),
	\end{equation}
	\begin{equation}\label{29}
		f(-q^{1/4}, q^{17/4})f(q^{1/4}, -q^{17/4})=f(-q^{1/2}, q^{17/2}) \varphi(q^{9/2})
	\end{equation}
	respectively. Employing $\eqref{28}$ and $\eqref{29}$ in $\eqref{25}$, we have
	\begin{equation*}
		\frac{1}{A_1(q)} - q^{1/2}A_1(q)= \frac{f(- q^{1/2}, -q^{917/2}) \varphi(q^{9/2})}{f(-q^4, -q^5) \psi(q^9)}.
	\end{equation*}
	Squaring $\eqref{24}$, we obtain
	\begin{equation}{\label{30}}
		\frac{1}{q^{1/4}A_1(q)} +q^{1/4}A_1(q)=\frac{f^2(q^{1/4}, -q^{17/4})}{q^{1/4}f(-q^4, -q^{14})f(-q^5, -q^{13})}-2.
	\end{equation}
	From \cite[p. 46, Entry 30 (v), (vi)]{Bernd}, we note that
	\begin{equation}{\label{31}}
		f^2(a, b)= f(a^2, b^2) \varphi(ab)+ 2a f(b/a, a^3b)\psi (a^2b^2).
	\end{equation}
	Setting  $\left\{a=q^{1/4}, b=-q^{17/4}\right\}$ in $\eqref{31}$, we obtain
	\begin{equation}{\label{32}}
		f^2(q^{1/4}, -q^{17/4})=f(q^{1/2}, q^{17/2})\varphi(-q^{9/2})+2 q^{1/4}f(-q^4, -q^{14}) \psi (q^9).
	\end{equation}
	Employing $\eqref{32}$ and $\eqref{28}$ in $\eqref{30}$, we have
	\begin{equation*}
		\frac{1}{A_1(q)} + q^{1/2}A_1(q)= \frac{f(q^{1/2}, q^{17/2}) \varphi(-q^{9/2})}{f(-q^4, -q^5) \psi(q^9)}.
	\end{equation*}
	Proofs of $\eqref{04}$-$\eqref{06}$ are identical to the proof of $\eqref{03}$, so we omit the proof.\\
	From $\eqref{03}$, $\eqref{04}$ and $\eqref{06}$, we obtain
	\begin{align}{\label{37}}
		\displaystyle \prod_{i=1,3,4}^{}\left(\dfrac{1}{A_i(q)}+q^{(2(i-1)+1)/2}A_i(q)\right)=\frac{\varphi^3(-q^{9/2})f(q^{1/2}, q^{17/2})f(q^{5/2}, q^{13/2})f(q^{7/2}, q^{11/2}) }{\psi^3(q^9)f(-q, -q^8)f(-q^2, -q^7)f(-q^4, -q^5)}.
	\end{align}
	From \cite[p. 349, Entry 2]{Bernd}, we note that
	\begin{equation}{\label{38}}
		f(-q, -q^8)f(-q^2, -q^7)f(-q^4, -q^5)=\frac{f(-q)f^3(-q^9)}{f(-q^3)}.
	\end{equation}
	Employing $\eqref{38}$ in $\eqref{37}$, we arrive at $\eqref{07}$.
\end{proof}
\begin{theorem}{\label{T3}}
	We have
	\begin{enumerate}[i.]
		\item $\dfrac{1}{B_1(q)} \mp q^{1/2}B_1(q)= \dfrac{f(\mp q^{1/2}, \mp q^{25/2}) \varphi(\pm q^{13/2})}{f(-q^6, -q^7) \psi(q^{13})}$, \label{08}\\
		\item $\dfrac{1}{B_2(q)} \mp q^{3/2}B_2(q)= \dfrac{f(\mp q^{3/2}, \mp q^{23/2}) \varphi(\pm q^{13/2})}{f(-q^5, -q^8) \psi(q^{13})}$, \label{09}\\
		\item $\dfrac{1}{B_3(q)} \mp q^{5/2}B_3(q)= \dfrac{f(\mp q^{5/2}, \mp q^{21/2}) \varphi(\pm q^{13/2})}{f(-q^4, -q^8) \psi(q^{13})}$, \label{010}\\
		\item $\dfrac{1}{B_4(q)} \mp q^{7/2}B_4(q)= \dfrac{f(\mp q^{7/2}, \mp q^{19/2}) \varphi(\pm q^{13/2})}{f(-q^3, -q^{10}) \psi(q^{13})}$, \label{011}\\
		\item $\dfrac{1}{B_5(q)} \mp q^{9/2}B_5(q)= \dfrac{f(\mp q^{9/2}, \mp q^{17/2}) \varphi(\pm q^{13/2})}{f(-q^2, -q^{11}) \psi(q^{13})}$, \label{012}\\
		\item $\dfrac{1}{B_6(q)} \mp q^{11/2}B_6(q)= \dfrac{f(\mp q^{11/2}, \mp q^{15/2}) \varphi(\pm q^{13/2})}{f(-q, -q^{12}) \psi(q^{13})}$, \label{013}\\
		\item $\displaystyle \prod_{i=1}^{6}\left(\dfrac{1}{B_i(q)}+q^{(2(i-1)+1)/2}B_i(q)\right)=\dfrac{\varphi^6(-q^{13/2})(-q^{\pm(2(i-1)+1))/2 };q^{13})_\infty}{\psi^6(q^{13})f(-q)}$. \label{014}
	\end{enumerate}
	
\end{theorem}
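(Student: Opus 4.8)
The plan is to follow the proof of Theorem~\ref{T2} line for line, replacing the order-eighteen data by the order-twenty-six data. For each $i$ with $1\le i\le 6$ I read off from \eqref{8}--\eqref{13} the quotient representation $B_i(q)=f(-q^{7-i},-q^{19+i})/f(-q^{6+i},-q^{20-i})$ and form $\frac{1}{\sqrt{q^{(2i-1)/4}B_i(q)}}\mp\sqrt{q^{(2i-1)/4}B_i(q)}$. The first step is to split the numerators that appear. Putting $\{a=-q^{(2i-1)/4},\,b=q^{(27-2i)/4}\}$ and then $\{a=q^{(2i-1)/4},\,b=-q^{(27-2i)/4}\}$ in the addition formula \eqref{20A}, a short computation of $a^3b,\,ab^3,\,b/a,\,a^5b^3$ (one checks $a^3b=-q^{6+i}$, $ab^3=-q^{20-i}$, $b/a=-q^{7-i}$, $a^5b^3=-q^{19+i}$) gives the analogues of \eqref{21} and \eqref{22},
\[
f(\mp q^{(2i-1)/4},\pm q^{(27-2i)/4})=f(-q^{6+i},-q^{20-i})\mp q^{(2i-1)/4}f(-q^{7-i},-q^{19+i}).
\]

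Substituting these back and proceeding as in \eqref{23}--\eqref{25}, I reach the combined relation
\[
\frac{1}{q^{(2i-1)/4}B_i(q)}-q^{(2i-1)/4}B_i(q)=\frac{f(-q^{(2i-1)/4},q^{(27-2i)/4})\,f(q^{(2i-1)/4},-q^{(27-2i)/4})}{q^{(2i-1)/4}\,f(-q^{7-i},-q^{19+i})\,f(-q^{6+i},-q^{20-i})}.
\]
Now I clean the right-hand side with the product formulas \eqref{26} and \eqref{27}. With $\{a=-q^{7-i},\,b=-q^{6+i}\}$ in \eqref{26} the denominator collapses to $f(-q^{7-i},-q^{6+i})\psi(q^{13})$ because $ab=q^{13}$, and with $\{a=-q^{(2i-1)/4},\,b=q^{(27-2i)/4}\}$ in \eqref{27} the numerator collapses to $f(-q^{(2i-1)/2},-q^{(27-2i)/2})\varphi(q^{13/2})$ because $-ab=q^{13/2}$. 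Multiplying through by $q^{(2i-1)/4}$, exactly as in the passage from \eqref{25} to the displayed identity for $A_1$, yields the lower-sign case of (i)--(vi).

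For the upper sign I square the $\,+\,$ version of the relation above and apply \eqref{31} with $\{a=q^{(2i-1)/4},\,b=-q^{(27-2i)/4}\}$; here $f(b/a,a^3b)=f(-q^{7-i},-q^{6+i})$, $\varphi(ab)=\varphi(-q^{13/2})$ and $\psi(a^2b^2)=\psi(q^{13})$, so the $\psi(q^{13})$-term precisely cancels the stray $-2$ coming from the squaring (after invoking the \eqref{26} evaluation of the denominator), leaving the upper-sign identity. All of this is mechanical once the substitutions are fixed, so I would state it as the analogue of the $A_1$ computation and omit the routine repetitions, as the authors do.

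For (vii) I multiply together the six upper-sign identities. Their numerators contribute $\varphi^6(-q^{13/2})\prod_{i=1}^{6}f(q^{(2i-1)/2},q^{(27-2i)/2})$ and $\psi^6(q^{13})$ gathers in the denominator, so the only genuine ingredient left is the product of the six denominator theta functions $\prod_{i=1}^{6}f(-q^{7-i},-q^{6+i})=\prod_{i=1}^{6}f(-q^{i},-q^{13-i})$. The key step---the analogue of \eqref{38}---is to prove by Jacobi's triple product that
\[
\prod_{i=1}^{6}f(-q^{i},-q^{13-i})=f(-q)\,f^{5}(-q^{13}).
\]
Each factor equals $(q^{i};q^{13})_\infty(q^{13-i};q^{13})_\infty(q^{13};q^{13})_\infty$, and since $13$ is prime the exponents $\{i,\,13-i:1\le i\le 6\}$ run through every nonzero residue modulo $13$, so $\prod_{i=1}^{6}(q^{i};q^{13})_\infty(q^{13-i};q^{13})_\infty=(q;q)_\infty/(q^{13};q^{13})_\infty=f(-q)/f(-q^{13})$, which gives the claim. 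I expect this product evaluation to be the main obstacle: in the order-eighteen case \eqref{38} the pair $(3,6)$ had to be dropped because $\gcd(3,9)\ne 1$, leaving the correction factor $f(-q^3)$ in the denominator, whereas the primality of $13$ makes every pair admissible, the product runs over all six indices, and no correction factor appears. Feeding this evaluation, together with the Jacobi-triple-product rewriting of the numerator $\prod_{i}f(q^{(2i-1)/2},q^{(27-2i)/2})$ as a product of the factors $(-q^{(2i-1)/2};q^{13})_\infty$ and $(-q^{(27-2i)/2};q^{13})_\infty$, into the product of the six identities gives the stated right-hand side of (vii).
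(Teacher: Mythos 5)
Your proposal is correct and is essentially the paper's own argument: for (i)--(vi) you transport the Theorem~\ref{T2}\eqref{03} computation to the order-twenty-six data exactly as the paper intends (your substitutions into \eqref{20A}, \eqref{26}, \eqref{27} and \eqref{31} all check out, and your uniform $f(b/a,a^{3}b)=f(-q^{7-i},-q^{6+i})$ even silently corrects two misprints in the paper, namely $f(-q^{4},-q^{8})$ in \eqref{010}, which should be $f(-q^{4},-q^{9})$, and $f(-q^{4},-q^{14})$ in \eqref{32}, which should be $f(-q^{4},-q^{5})$), while for (vii) you perform the same multiplication of the six upper-sign identities, the only divergence being that you prove the product evaluation \eqref{40} in one line from the triple product identity and the primality of $13$, where the paper simply cites Berndt's Entry~8(ii). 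One bookkeeping caveat on your final step: the triple-product rewriting of the numerator also produces $(q^{13};q^{13})_\infty^{6}$, and $(q^{13};q^{13})_\infty^{6}/f^{5}(-q^{13})=f(-q^{13})$ survives, so what your computation actually yields is $\varphi^{6}(-q^{13/2})\,f(-q^{13})\,\prod_{i=1}^{6}(-q^{\pm(2i-1)/2};q^{13})_\infty\big/\bigl(\psi^{6}(q^{13})\,f(-q)\bigr)$; the printed right-hand side of \eqref{014} is missing this factor $f(-q^{13})$ (its denominator should read $f(-q)/f(-q^{13})$, in analogy with \eqref{07}, where the corresponding $(q^{9};q^{9})_\infty^{3}$ cancels $f^{3}(-q^{9})$ exactly), which is a typo in the paper rather than a gap in your proof.
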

\begin{proof}
	Proofs of $\eqref{08}$-$\eqref{013}$ are identical to the proof of Theorem $\ref{T2}$$\eqref{03}$, so we omit the proof.\\
	From  $\eqref{08}$-$\eqref{013}$, we have
	\begin{equation*}
		\displaystyle \prod_{i=1}^{6}\left(\frac{1}{B_i(q)}+q^{(2(i-1)+1)/2}B_i(q)\right)
	\end{equation*}
	\begin{equation}{\label{39}}
		=\frac{\varphi^6(-q^{13/2}f(q^{1/2}, q^{25/2})f(q^{3/2}, q^{23/2})f(q^{5/2}, q^{21/2})f(q^{7/2}, q^{19/2})f(q^{9/2}, q^{17/2})f(q^{11/2}, q^{15/2}))}{\psi^6(q^{13})f(-q, -q^{12})f(-q^2, -q^{11})f(-q^3, -q^{10})f(-q^4, -q^{9})f(-q^5, -q^{8})f(-q^6, -q^{7})}.
	\end{equation}
	From \cite[p. 376, Entry 8 (ii)]{Bernd}, we note that
	\begin{equation}{\label{40}}
		f(-q, -q^{12})f(-q^2, -q^{11})f(-q^3, -q^{10})f(-q^4, -q^{9})f(-q^5, -q^{8})f(-q^6, -q^{7})=f(-q)f^5(-q^{13}).
	\end{equation}
	Employing $\eqref{40}$ in $\eqref{39}$, we arrive at $\eqref{014}$.
\end{proof}
\begin{theorem}{\label{t4}}
	We have
	\begin{enumerate}[i.]
		\item $\dfrac{1}{C_1(q)} \mp q^{1/2}C_1(q)= \dfrac{f(\mp q^{1/2}, \mp q^{29/2}) \varphi(\pm q^{15/2})}{f(-q^7, -q^8) \psi(q^{15})}$, \label{t41}\\
		\item $\dfrac{1}{C_2(q)} \mp q^{3/2}C_2(q)= \dfrac{f(\mp q^{3/2}, \mp q^{27/2}) \varphi(\pm q^{15/2})}{f(-q^6, -q^9) \psi(q^{15})}$, \label{t42}\\
		\item $\dfrac{1}{C_3(q)} \mp q^{5/2}C_3(q)= \dfrac{f(\mp q^{5/2}, \mp q^{25/2}) \varphi(\pm q^{15/2})}{f(-q^5, -q^{10}) \psi(q^{15})}$, \label{t43}\\
		\item $\dfrac{1}{C_4(q)} \mp q^{7/2}C_4(q)= \dfrac{f(\mp q^{7/2}, \mp q^{23/2}) \varphi(\pm q^{15/2})}{f(-q^4, -q^{11}) \psi(q^{15})}$, \label{t44}\\
		\item $\dfrac{1}{C_5(q)} \mp q^{9/2}C_5(q)= \dfrac{f(\mp q^{9/2}, \mp q^{21/2}) \varphi(\pm q^{15/2})}{f(-q^3, -q^{12}) \psi(q^{15})}$, \label{t45}\\
		\item $\dfrac{1}{C_6(q)} \mp q^{11/2}C_6(q)= \dfrac{f(\mp q^{11/2}, \mp q^{19/2}) \varphi(\pm q^{15/2})}{f(-q^2, -q^{13}) \psi(q^{15})}$, \label{t46}\\
		\item $\dfrac{1}{C_7(q)} \mp q^{13/2}C_7(q)= \dfrac{f(\mp q^{13/2}, \mp q^{17/2}) \varphi(\pm q^{15/2})}{f(-q, -q^{14}) \psi(q^{15})}$, \label{t47}\\
	\end{enumerate}
\end{theorem}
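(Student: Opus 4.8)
The plan is to prove each of the seven identities (i)--(vii) by the method already used for Theorem~\ref{T2}(i), since the order-thirty continued fractions $C_i(q)$ are theta quotients of exactly the same shape as $A_1(q)$. Writing the general member as
\[
C_i(q)=\frac{f(-q^{8-i},-q^{22+i})}{f(-q^{7+i},-q^{23-i})}, \qquad 1\le i\le 7,
\]
I would first form the two normalized combinations $\dfrac{1}{\sqrt{q^{(2i-1)/4}C_i(q)}}\mp\sqrt{q^{(2i-1)/4}C_i(q)}$. Each reduces to a single ratio whose numerator is $f(-q^{7+i},-q^{23-i})\mp q^{(2i-1)/4}f(-q^{8-i},-q^{22+i})$ and whose denominator is $\sqrt{q^{(2i-1)/4}f(-q^{8-i},-q^{22+i})f(-q^{7+i},-q^{23-i})}$, precisely as in \eqref{20}.

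Next I would collapse each two-term numerator into a single theta function using the splitting \eqref{20A}. A short computation shows that the choice $a=\mp q^{(2i-1)/4}$, $b=\pm q^{(31-2i)/4}$ produces $a^3b=-q^{7+i}$, $ab^3=-q^{23-i}$, $b/a=-q^{8-i}$ and $a^5b^3=-q^{22+i}$, so that \eqref{20A} gives
\[
f(\mp q^{(2i-1)/4},\pm q^{(31-2i)/4})=f(-q^{7+i},-q^{23-i})\mp q^{(2i-1)/4}f(-q^{8-i},-q^{22+i}),
\]
the analogues of \eqref{21} and \eqref{22}. This recasts the two combinations in the clean forms \eqref{23} and \eqref{24}.

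For the upper-sign ($-$) identity I would multiply the two resulting expressions and apply the product rule \eqref{27} with $a=-q^{(2i-1)/4}$, $b=q^{(31-2i)/4}$; since $ab=-q^{15/2}$ while $a^2=q^{(2i-1)/2}$ and $b^2=q^{(31-2i)/2}$, the numerator collapses to $f(-q^{(2i-1)/2},-q^{(31-2i)/2})\varphi(q^{15/2})$. At the same time \eqref{26} with $a=-q^{8-i}$, $b=-q^{7+i}$ (so that $ab^2=-q^{22+i}$, $a^2b=-q^{23-i}$ and $ab=q^{15}$) rewrites the denominator product as $f(-q^{8-i},-q^{7+i})\psi(q^{15})$, yielding the stated right-hand side. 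For the lower-sign ($+$) identity I would instead square \eqref{24} and invoke the duplication formula \eqref{31} with $a=q^{(2i-1)/4}$, $b=-q^{(31-2i)/4}$; here $f(a^2,b^2)=f(q^{(2i-1)/2},q^{(31-2i)/2})$, $\varphi(ab)=\varphi(-q^{15/2})$ and $\psi(a^2b^2)=\psi(q^{15})$, and after the same denominator simplification the $+2$ generated by squaring is cancelled by the $\psi$-term of \eqref{31}, exactly as in the passage through \eqref{30}.

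The substantive point, and the only place real care is needed, is the exponent bookkeeping: for every $i$ one must verify that the four images $a^3b,\ ab^3,\ b/a,\ a^5b^3$ under \eqref{20A} land on precisely the arguments of the numerator and denominator of $C_i(q)$, and that the moduli stay fixed at $ab=q^{15}$ in \eqref{26} and $ab=\mp q^{15/2}$ in \eqref{27} and \eqref{31}, independently of $i$. This $i$-independence of the moduli is exactly what forces $\varphi(\pm q^{15/2})$ and $\psi(q^{15})$ to appear uniformly in all seven cases, so once the computation is checked for a single value of $i$ it transfers verbatim to the rest; hence I anticipate no genuine obstacle beyond this routine verification.
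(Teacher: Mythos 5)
Your proposal is correct and is precisely the argument the paper intends: the paper omits the proof of Theorem~\ref{t4} entirely, stating it is identical to that of Theorem~\ref{T2}\eqref{03}, and your adaptation — splitting via \eqref{20A} with $a=\mp q^{(2i-1)/4}$, $b=\pm q^{(31-2i)/4}$, then applying \eqref{27} and \eqref{26} for the upper sign and \eqref{31} for the lower sign — carries out exactly that omitted computation, with all exponent bookkeeping checking out (including the cancellation of the $-2$ by the $\psi(q^{15})$-term, where your use of $f(b/a,a^3b)=f(-q^{8-i},-q^{7+i})$ in \eqref{31} is the correct form, quietly repairing the misprint in the paper's \eqref{32}).
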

\begin{proof}
	Proofs of $\eqref{t41}$-$\eqref{t47}$ are identical to the proof of Theorem $\ref{T2}$$\eqref{03}$, so we omit the proof.
\end{proof}

\begin{theorem}{\label{T5}}
	Let $X_1(n)$  denote the number of partitions of $n$ into parts congruent to $\pm 3, \pm 6, \pm 15 $ or $\pm 18\pmod {36}$ such that the parts congruent to $\pm 6$ and $\pm 18 \pmod {36}$ have two colors. Let $X_2(n)$  denote the number of partitions of $n$ into parts congruent to $\pm 3, \pm 12, \pm 15 $ or $\pm 18\pmod {36}$ such that the parts congruent to $\pm 12$ and $\pm 18 \pmod {36}$ have two colors. Let $X_3(n)$  denote the number of partitions of $n$ into parts congruent to $\pm 6, \pm 9$ and $\pm 12\pmod {36}$ with two colors. Then for any $n \ge 3$,
	\begin{equation*}
		X_1(n)-X_2(n-3)-X_3(n)=0.
	\end{equation*}
\end{theorem}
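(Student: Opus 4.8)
The plan is to convert the combinatorial statement into an identity of infinite products, reduce it to a theta-function identity, and then establish the latter with the same machinery used in Theorem~\ref{T2}.

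First I would write down the three generating functions using the dictionary ``a part of size $k$ carried in $c$ colours contributes a factor $(1-q^{k})^{-c}$,'' remembering that $\pm18\equiv 18\pmod{36}$ is a single residue class. This gives
\[ \sum_{n\ge0}X_1(n)q^n=\frac{1}{(q^{3},q^{33};q^{36})_\infty(q^{6},q^{30};q^{36})_\infty^{2}(q^{15},q^{21};q^{36})_\infty(q^{18};q^{36})_\infty^{2}}=:G_1(q), \]
\[ \sum_{n\ge0}X_2(n)q^n=\frac{1}{(q^{3},q^{33};q^{36})_\infty(q^{12},q^{24};q^{36})_\infty^{2}(q^{15},q^{21};q^{36})_\infty(q^{18};q^{36})_\infty^{2}}=:G_2(q), \]
\[ \sum_{n\ge0}X_3(n)q^n=\frac{1}{(q^{6},q^{30};q^{36})_\infty^{2}(q^{9},q^{27};q^{36})_\infty^{2}(q^{12},q^{24};q^{36})_\infty^{2}}=:G_3(q). \]
The assertion $X_1(n)-X_2(n-3)-X_3(n)=0$ is then exactly the coefficient-wise form of $G_1(q)=q^{3}G_2(q)+G_3(q)$; since the constant terms already match ($1=0+1$) and all three series are supported on multiples of $3$, it suffices to prove this product identity, and doing so in fact also covers the trivial range $n<3$.

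Next, because every part is divisible by $3$, each $G_i$ is a power series in $q^{3}$; substituting $q^{3}\mapsto q$ and clearing denominators in $G_1=qG_2+G_3$ yields the modulus-$12$ identity
\[ (q^{4},q^{8};q^{12})_\infty^{2}(q^{3},q^{9};q^{12})_\infty^{2}=q\,(q^{2},q^{10};q^{12})_\infty^{2}(q^{3},q^{9};q^{12})_\infty^{2}+(q,q^{11};q^{12})_\infty(q^{5},q^{7};q^{12})_\infty(q^{6};q^{12})_\infty^{2}. \]
Using the Jacobi triple product together with \eqref{49}--\eqref{51}, each Pochhammer block becomes a theta value, e.g. $(q^{4},q^{8};q^{12})_\infty=f(-q^{4})/(q^{12};q^{12})_\infty$ and $(q^{6};q^{12})_\infty^{2}=\varphi(-q^{6})/(q^{12};q^{12})_\infty$, so the identity is recast as a genuine three-term relation among theta functions of argument $q$.

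The core of the argument is to prove that theta relation. I would follow the template of Theorem~\ref{T2}: use Ramanujan's addition formula \eqref{20A} to dissect $f(-q,-q^{11})$ and $f(-q^{5},-q^{7})$, and the product rules \eqref{26}, \eqref{27}, \eqref{31} to recombine the pieces (for instance \eqref{26} with $\{a,b\}=\{-q,-q^{5}\}$ collapses $f(-q,-q^{11})f(-q^{5},-q^{7})$ to $f(-q,-q^{5})\psi(q^{6})$), aiming to match the right-hand side to the left. I expect the main obstacle to sit precisely here: the two-coloured parts produce squared Pochhammer symbols and the $\varphi$-factor, so the relation is three-term rather than the clean two-term relations of Theorem~\ref{T2}, and the bookkeeping of signs and theta arguments is delicate. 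A robust fallback is the theory of modular forms: after clearing denominators both sides extend, up to a power of $(q^{12};q^{12})_\infty$, to holomorphic modular forms of equal weight (with a suitable multiplier) on a congruence subgroup, whence the identity follows from checking finitely many $q$-coefficients via a Sturm-type bound. Finally, undoing $q^{3}\mapsto q$ and reading off the coefficient of $q^{n}$ returns $X_1(n)-X_2(n-3)-X_3(n)=0$ for every $n\ge3$, as required.
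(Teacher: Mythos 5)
Your reduction of the combinatorial statement to a three-term product identity is sound and is essentially the first half of the paper's argument, but your plan stops short of where the paper's proof actually lives. The paper proves no new theta identity at this stage: it takes Theorem \ref{T2}\,\eqref{04}, already established, replaces $q$ by $q^{2}$, rewrites both sides as infinite products via the triple product, \eqref{49} and \eqref{50} (this is \eqref{41}), and then divides by $(q^{\pm3,\pm6,\pm12,\pm15};q^{36})_\infty(q^{\pm18};q^{36})_\infty^{2}$ to obtain \eqref{43}, whose three quotients are read off as the generating functions of $X_1$, $X_2$, $X_3$. Your mod-$12$ identity is in fact exactly Theorem \ref{T2}\,\eqref{04} with $q\mapsto q^{2/3}$, in theta form $f(-q^{4},-q^{8})/f(-q^{2},-q^{10})-q\,f(-q^{2},-q^{10})/f(-q^{4},-q^{8})=f(-q,-q^{5})\,\varphi(q^{3})/\bigl(f(-q^{2},-q^{4})\,\psi(q^{6})\bigr)$, so no fresh dissection argument and no modular-forms fallback is needed; by not recognizing this, you leave the heart of the proof as an unexecuted plan rather than a completed step.

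More seriously, the product identity you set yourself to prove is false, because of the self-paired residue $18\pmod{36}$. Converting the theta identity above to products produces $(q^{6};q^{12})_\infty^{4}$, not $(q^{6};q^{12})_\infty^{2}$, in the last term: the correct cleared form is $(q^{4},q^{8};q^{12})_\infty^{2}(q^{3},q^{9};q^{12})_\infty^{2}=q\,(q^{2},q^{10};q^{12})_\infty^{2}(q^{3},q^{9};q^{12})_\infty^{2}+(q,q^{11};q^{12})_\infty(q^{5},q^{7};q^{12})_\infty(q^{6};q^{12})_\infty^{4}$. Your version already fails at $q^{6}$ (coefficient $1$ on the left, $3$ on the right), so both your theta-template endgame and your Sturm-bound fallback would refute rather than prove it. Equivalently, under your literal reading (two colours for the single class $18$), the theorem itself fails at $n=18$: one counts $X_1(18)=13$, $X_2(15)=4$, $X_3(18)=11$, giving $13-4-11=-2\neq 0$; the statement is true only if parts $\equiv 18\pmod{36}$ carry four colours. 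To be fair, the paper's prose contains the same two-colour slip, and its displayed identities \eqref{41} and \eqref{43} are correct precisely because the notation $(q^{\pm18};q^{36})_\infty$ there silently means $(q^{18};q^{36})_\infty^{2}$; but as a proof attempt your plan cannot close, since the identity you chose as your target is not true as written.
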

\begin{proof}
	Applying $\eqref{5}$ , $\eqref{49}$,  $\eqref{50}$ and changing $q$ by $q^2$ in Theorem $\ref{T2}$ $\eqref{04}$  we deduce
	\begin{equation}{\label{41}}
		\frac{( q^{\pm 12}; q^{36})_\infty}{(q^{\pm 6}; q^{36})_\infty}-q^3\frac{(q^{\pm 6}; q^{36})_\infty}{( q^{\pm 12}; q^{36})_\infty}-\frac{(q^{\pm 3, \pm 15};q^{36})_\infty(q^{\pm 18}; q^{36})_\infty^2}{(q^{\pm 6, \pm 12};q^{36})_\infty(q^{\pm 9}; q^{36})_\infty^2}=0.
	\end{equation}
	Dividing $\eqref{41}$ by $(q^{\pm 3, \pm 6, \pm 12, \pm 15};q^{36})_\infty(q^{\pm 18}; q^{36})_\infty^2$, we have
	\begin{equation}{\label{43}}
		\frac{1}{(q^{\pm 6, \pm 18}; q^{36})_\infty^2 (q^{\pm 3 \pm 15}; q^{36})_\infty}-\frac{q^3}{(q^{\pm 12, \pm 18}; q^{36})_\infty^2 (q^{\pm 3, \pm 15}; q^{36})_\infty}-\frac{1}{(q^{\pm 6, \pm 9, \pm 12}; q^{36})_\infty^2 }=0.
	\end{equation}
	The above quotients of $\eqref{43}$ represents generating functions for $X_1(n)$, $X_2(n)$ and $X_3(n)$ respectively. Hence  $\eqref{43}$ is equivalent to
	\begin{align*}
		\sum_{n=0}^{\infty}X_1(n)q^n-q^3\sum_{n=0}^{\infty}X_2(n)q^n-\sum_{n=0}^{\infty}X_3(n)q^n&=0.
	\end{align*}
	Equating coefficient of $q^n$ on both sides, we arrive at the desired result.
\end{proof}
\noindent For $n=9$, Theorem $\ref{T5}$.  is verified.
\begin{center}
	\begin{tabular}{ll}
		\hline
		$X_1(9)=3:$ &$6_r+3, 6_g+3, 3+3+3$  \\
		\hline
		$X_2(6)=1:$ &$3+3$  \\
		\hline
		$X_3(9)=2:$ &$9_r, 9_g$  \\
		\hline
	\end{tabular}	
	\label{table 2}
\end{center}
\begin{theorem}{\label{T6}}
	Let $Y_1(n)$  denote the number of partitions of $n$ into parts congruent to $\pm 1, \pm 12, \pm 25 $ or $\pm 26 \pmod {52}$ such that the parts congruent to $\pm 12$ and $\pm 26 \pmod {52}$ have two colors. Let $Y_2(n)$  denote the number of partitions of $n$ into parts congruent to $\pm 1, \pm 14, \pm 25 $ or $\pm 26\pmod {52}$ such that the parts congruent to $\pm 14$ and $\pm 26 \pmod {52}$ have two colors. Let $Y_3(n)$  denote the number of partitions of $n$ into parts congruent to $\pm 12, \pm 13$ and $\pm 14\pmod {52}$ with two colors. Then for any $n \ge 1$,
	\begin{equation*}
		Y_1(n)-Y_2(n-1)-Y_3(n)=0.
	\end{equation*}
\end{theorem}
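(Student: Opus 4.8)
The plan is to follow the proof of Theorem~\ref{T5} verbatim, with the order-eighteen fraction $A_2$ replaced by the order-twenty-six fraction $B_1$ from Theorem~\ref{T3}. The single power of $q$ in the shift $Y_2(n-1)$ signals that the governing identity is Theorem~\ref{T3}\eqref{08}, since its factor $q^{1/2}B_1(q)$ becomes $qB_1(q^2)$ after $q\mapsto q^2$, and its exponents double under $q\mapsto q^2$ to exactly the residues $\pm1,\pm25$ (numerator theta), $\pm12,\pm14$ (denominator theta), $\pm13$ (the $\varphi$) and $\pm26$ (the $\psi$) modulo $52$ that govern $Y_1,Y_2,Y_3$.

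First I would take \eqref{08} with the upper sign and replace $q$ by $q^2$, obtaining
\[
\frac{1}{B_1(q^2)}-qB_1(q^2)=\frac{f(-q,-q^{25})\,\varphi(q^{13})}{f(-q^{12},-q^{14})\,\psi(q^{26})}.
\]
Next I would convert every factor to an infinite product modulo $52$: the definition \eqref{8} of $B_1$ together with the Jacobi triple product gives $1/B_1(q^2)=(q^{\pm14};q^{52})_\infty/(q^{\pm12};q^{52})_\infty$, while $\varphi,\psi$ and the numerator theta function are expanded via \eqref{49}, \eqref{50} and the elementary splittings $(q^a;q^m)_\infty=(q^a;q^{2m})_\infty(q^{a+m};q^{2m})_\infty$ and $(-a;q)_\infty=(a^2;q^2)_\infty/(a;q)_\infty$. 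After cancellation this should produce the analogue of \eqref{41}, namely
\[
\frac{(q^{\pm14};q^{52})_\infty}{(q^{\pm12};q^{52})_\infty}-q\,\frac{(q^{\pm12};q^{52})_\infty}{(q^{\pm14};q^{52})_\infty}-\frac{(q^{\pm1,\pm25};q^{52})_\infty(q^{\pm26};q^{52})_\infty^2}{(q^{\pm12,\pm14};q^{52})_\infty(q^{\pm13};q^{52})_\infty^2}=0.
\]

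Then, exactly as in the passage from \eqref{41} to \eqref{43}, I would divide through by $(q^{\pm1,\pm12,\pm14,\pm25};q^{52})_\infty(q^{\pm26};q^{52})_\infty^2$. The three resulting quotients collapse to
\[
\frac{1}{(q^{\pm12,\pm26};q^{52})_\infty^2(q^{\pm1,\pm25};q^{52})_\infty},\quad
\frac{q}{(q^{\pm14,\pm26};q^{52})_\infty^2(q^{\pm1,\pm25};q^{52})_\infty},\quad
\frac{1}{(q^{\pm12,\pm13,\pm14};q^{52})_\infty^2},
\]
which are precisely the generating functions for $Y_1(n)$, $qY_2(n)$ and $Y_3(n)$ read off from the two-coloured descriptions in the statement. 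Comparing coefficients of $q^n$ then yields $Y_1(n)-Y_2(n-1)-Y_3(n)=0$ for all $n\ge1$.

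The routine but delicate part is the product conversion producing the analogue of \eqref{41}. Because $26\equiv-26\pmod{52}$ the factor $(q^{26};q^{52})_\infty$ is self-paired, so the contributions of $\varphi(q^{13})$ and $\psi(q^{26})$ must be combined carefully: the numerator accumulates $(q^{26};q^{52})_\infty^4=(q^{\pm26};q^{52})_\infty^2$ while $\varphi(q^{13})$ deposits $(q^{\pm13};q^{52})_\infty^{-2}$, and these squared factors are exactly what encode the two colours of the $\pm26$ parts in $Y_1,Y_2$ and of the $\pm13$ parts in $Y_3$. Once the analogue of \eqref{41} is verified, the division and the coefficient comparison are automatic.
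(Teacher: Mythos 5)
Your proposal is correct and takes essentially the same route as the paper: the paper's proof of Theorem \ref{T6} likewise applies Theorem \ref{T3}\eqref{08} (upper sign) with $q$ replaced by $q^2$, uses \eqref{8}, \eqref{49} and \eqref{50} to rewrite everything as products modulo $52$, divides by $(q^{\pm 1, \pm 12, \pm 14, \pm 25};q^{52})_\infty(q^{\pm 26}; q^{52})_\infty^2$ to obtain \eqref{44}, and then equates coefficients of $q^n$. Your intermediate identity and the three resulting quotients agree exactly with \eqref{44}, and your handling of the self-paired factor $(q^{26};q^{52})_\infty$ correctly accounts for the squared $\pm 26$ and $\pm 13$ factors that the paper leaves implicit.
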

\begin{proof}
	Applying $\eqref{8}$ , $\eqref{49}$,  $\eqref{50}$ and changing $q$ by $q^2$ in Theorem $\ref{T3}$ $\eqref{08}$ and then dividing by $(q^{\pm 1, \pm 12, \pm 14, \pm 25};q^{52})_\infty(q^{\pm 26}; q^{52})_\infty^2$, we obtain
	\begin{equation}{\label{44}}
		\frac{1}{(q^{\pm 12, \pm 26}; q^{52})_\infty^2 (q^{\pm 1, \pm 25 }; q^{52})_\infty}-\frac{q}{(q^{\pm 14, \pm 26 }; q^{52})_\infty^2 (q^{\pm 1, \pm 25}; q^{52})_\infty}-\frac{1}{(q^{\pm 12, \pm 14, \pm 13 }; q^{52})_\infty^2 }=0.
	\end{equation}
	The above quotients of $\eqref{44}$ represents generating functions for $Y_1(n), Y_2(n)$ and $Y_3(n)$ respectively. Hence  $\eqref{44}$ is equivalent to
	\begin{align*}
		\sum_{n=0}^{\infty}Y_1(n)q^n-q\sum_{n=0}^{\infty}Y_2(n)q^n-\sum_{n=0}^{\infty}Y_3(n)q^n&=0.
	\end{align*}
	Equating coefficient of $q^n$ on both sides, we arrive at the desired result.
\end{proof}
\noindent \noindent For $n=12$, Theorem $\ref{T6}$.  is verified.
\begin{center}
	\begin{tabular}{ll}
		\hline
		$Y_1(12)=3:$ &$12_r, 12_g, 1+1+1+1+1+1+1+1+1+1+1+1$  \\
		\hline
		$Y_2(11)=1:$ &$1+1+1+1+1+1+1+1+1+1+1$  \\
		\hline
		$Y_3(12)=2:$ &$12_r, 12_g$  \\
		\hline
	\end{tabular}	
	\label{table 3}
\end{center}
\begin{theorem}{\label{T7}}
	Let $Z_1(n)$  denote the number of partitions of $n$ into parts congruent to $\pm 1, \pm 14, \pm 29 $ or $\pm 30 \pmod {60}$ such that the parts congruent to $\pm 14$ and $\pm 30 \pmod {60}$ have two colors. Let $Z_2(n)$  denote the number of partitions of $n$ into parts congruent to $\pm 1, \pm 16, \pm 29 $ or $\pm 30\pmod {60}$ such that the parts congruent to $\pm 16$ and $\pm 30 \pmod {60}$ have two colors. Let $Z_3(n)$  denote the number of partitions of $n$ into parts congruent to $\pm 14, \pm 15$ and $\pm 16\pmod {}$ with two colors. Then for any $n \ge 1$,
	\begin{equation*}
		Z_1(n)-Z_2(n-1)-Z_3(n)=0.
	\end{equation*}
\end{theorem}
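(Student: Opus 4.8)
The plan is to specialize the order-thirty identity of Theorem~\ref{t4} in exactly the way Theorems~\ref{T5} and~\ref{T6} specialized their order-eighteen and order-twenty-six analogues; the modulus $60$ in the statement is $2\times 30$, which signals that the relevant continued fraction is $C_1$ and that one should replace $q$ by $q^2$ at the end. First I would take part~\eqref{t41} of Theorem~\ref{t4} with the upper sign, namely
\begin{equation*}
	\frac{1}{C_1(q)}-q^{1/2}C_1(q)=\frac{f(-q^{1/2},-q^{29/2})\,\varphi(q^{15/2})}{f(-q^7,-q^8)\,\psi(q^{15})},
\end{equation*}
and replace $q$ by $q^2$ throughout, so that every exponent becomes an integer and the natural period becomes $q^{60}$. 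Using the definition~\eqref{r1} of $C_1$, the product forms~\eqref{49} and~\eqref{50} of $\varphi$ and $\psi$, and Jacobi's triple product identity, I would rewrite $1/C_1(q^2)$, $C_1(q^2)$ and the resulting theta quotient as infinite products in base $q^{60}$; here $1/C_1(q^2)=(q^{\pm16};q^{60})_\infty/(q^{\pm14};q^{60})_\infty$, while the right-hand side contributes the outer residues $\pm1,\pm29$ (from $f(-q,-q^{29})$), the residue $\pm30$ (from $\varphi$ and $\psi$ evaluated at argument $q^{30}$) and the residue $\pm15$ (from the factor $(-q^{15};q^{30})_\infty$ inside $\varphi(q^{15})$).

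After moving the right-hand theta term to the left, I would obtain the order-thirty analogue of~\eqref{41} and~\eqref{44}, and then divide through by the common factor $(q^{\pm1,\pm14,\pm16,\pm29};q^{60})_\infty(q^{\pm30};q^{60})_\infty^2$. Using the elementary relation $(-q^{a};q^{b})_\infty=(q^{2a};q^{2b})_\infty/(q^{a};q^{b})_\infty$ together with the collapses $q^{\pm30}\equiv q^{30}$ and $(q^{\pm15};q^{60})_\infty=(q^{15};q^{60})_\infty(q^{45};q^{60})_\infty$, the three terms should reduce to
\begin{equation*}
	\frac{1}{(q^{\pm1,\pm29};q^{60})_\infty(q^{\pm14,\pm30};q^{60})_\infty^2}-\frac{q}{(q^{\pm1,\pm29};q^{60})_\infty(q^{\pm16,\pm30};q^{60})_\infty^2}-\frac{1}{(q^{\pm14,\pm15,\pm16};q^{60})_\infty^2}=0.
\end{equation*}
The three reciprocal products are precisely the generating functions of $Z_1(n)$, $Z_2(n)$ and $Z_3(n)$, the squared factors encoding the two-colour conditions on the parts $\pm14,\pm30$, on $\pm16,\pm30$, and on $\pm14,\pm15,\pm16$ respectively. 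Equating coefficients of $q^n$ then gives $Z_1(n)-Z_2(n-1)-Z_3(n)=0$, the factor $q$ in front of the middle term producing the shift $n\mapsto n-1$.

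I expect the main obstacle to be the middle step: verifying that the theta quotient $f(-q,-q^{29})\varphi(q^{15})\big/\big(f(-q^{14},-q^{16})\psi(q^{30})\big)$ collapses to a single reciprocal product. This is the delicate bookkeeping where the powers of $(q^{30};q^{60})_\infty$ contributed by $\varphi$, by $\psi$, and by the two $(q^{30};q^{30})_\infty$ factors arising through Jacobi's identity must cancel exactly, and where one must track which residues modulo $60$ are self-paired (only $\pm30$ is). A stray surviving power of $(q^{30};q^{60})_\infty$ in the third term would spoil the clean interpretation of $Z_3$, so this cancellation is the crux on which the whole identity rests. Once it is confirmed, identifying the three generating functions and reading off the coefficient identity is routine, and a numerical check for a small $n$ (as in the verification tables following Theorems~\ref{T5} and~\ref{T6}) would provide a useful consistency test.
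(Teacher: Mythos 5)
Your proposal is correct and follows exactly the paper's own route: specializing Theorem \ref{t4}\eqref{t41} with the upper sign, replacing $q$ by $q^2$, converting to products in base $q^{60}$ via \eqref{r1}, \eqref{49}, \eqref{50} and the triple product, dividing by $(q^{\pm 1, \pm 14, \pm 16, \pm 29};q^{60})_\infty(q^{\pm 30}; q^{60})_\infty^2$ to reach \eqref{t48}, and equating coefficients. The cancellation of the $(q^{30};q^{60})_\infty$ powers that you flag as the crux does indeed work out exactly (the quotient collapses to $(q^{\pm1,\pm29};q^{60})_\infty(q^{\pm30};q^{60})_\infty^2/\bigl((q^{\pm15};q^{60})_\infty^2(q^{\pm14},q^{\pm16};q^{60})_\infty\bigr)$), so your argument matches the paper's proof in substance and detail.
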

\begin{proof}
	Applying $\eqref{r1}$ , $\eqref{49}$,  $\eqref{50}$ and changing $q$ by $q^2$ in Theorem $\ref{t4}$ $\eqref{t41}$ and then dividing by $(q^{\pm 1, \pm 14, \pm 16, \pm 29};q^{60})_\infty(q^{\pm 30}; q^{60})_\infty^2$, we obtain
	\begin{equation}{\label{t48}}
		\frac{1}{(q^{\pm 14, \pm 30}; q^{60})_\infty^2 (q^{\pm 1, \pm 29 }; q^{60})_\infty}-\frac{q}{(q^{\pm 16, \pm 30 }; q^{60})_\infty^2 (q^{\pm 1, \pm 29}; q^{60})_\infty}-\frac{1}{(q^{\pm 14, \pm 15, \pm 16 }; q^{60})_\infty^2 }=0.
	\end{equation}
	The above quotients of $\eqref{t48}$ represents generating functions for $Z_1(n), Z_2(n)$ and $Z_3(n)$ respectively. Hence  $\eqref{t48}$ is equivalent to
	\begin{align*}
		\sum_{n=0}^{\infty}Z_1(n)q^n-q\sum_{n=0}^{\infty}Z_2(n)q^n-\sum_{n=0}^{\infty}Z_3(n)q^n&=0.
	\end{align*}
	Equating coefficient of $q^n$ on both sides, we arrive at the desired result.
\end{proof}
\noindent \noindent For $n=16$, Theorem $\ref{T7}$.  is verified.
\begin{center}
	\begin{tabular}{ll}
		\hline
		$Z_1(12)=3:$ &$14_r+1+1, 14_g+1+1, 1+1+1+1+1+1+1+1+1+1+1+1+1+1+1+1$  \\
		\hline
		$Z_2(11)=1:$ &$1+1+1+1+1+1+1+1+1+1+1+1+1+1+1$  \\
		\hline
		$Z_3(12)=2:$ &$16_r, 16_g$  \\
		\hline
	\end{tabular}	
	\label{table 3}
\end{center}


\begin{thebibliography}{99}
		\bibitem {Bernd}
	Berndt, B.C.: Ramanujan's Notebooks, Part III, 
	Springer, New York (1991)
	\bibitem {1998}
	Berndt, B.C.: Ramanujan's Notebooks, Part V, 
	Springer, New York (1998)
	\bibitem {1979}G. E. Andrews and D. Bressoud: Vanishing coefficients in infinite product expansion, J. Aust. Math. Soc. Ser., \textbf{27}  199-202 (1979)
\end{thebibliography}
\end{document}